\documentclass[11pt]{article}

\usepackage[utf8]{inputenc} 
\usepackage[english]{babel}
\usepackage{geometry}
\geometry{margin=1.1in}
\usepackage{amsmath, amsthm, amsfonts, amssymb, tikz-cd, mathtools, mathrsfs,theoremref, txfonts}
\usepackage{tgpagella} 
\usepackage[T1]{fontenc}
\usepackage{graphicx}
\usepackage{hyperref}

\newtheorem{theorem}{Theorem}[section]
\newtheorem{conjecture}[theorem]{Conjecture}

\newtheorem{lemma}[theorem]{Lemma}

\newtheorem{claim}[theorem]{Claim}
\theoremstyle{definition}

\newtheorem{remark}[theorem]{Remark}

\newtheorem{question}[theorem]{Question}

\usepackage{tcolorbox}
\newtcolorbox{mybox}{colback=red!5!white,colframe=red!75!black}
\newtcolorbox{alebox}{colback=blue!5!white,colframe=blue!75!black}

\newcommand{\R}{\mathbb{R}}

\newcommand{\ep}{\epsilon}

\DeclarePairedDelimiter\abs{\lvert}{\rvert}

\usepackage{titlesec}
\setcounter{secnumdepth}{4}
\titleformat{\paragraph}
{\normalfont\normalsize\bfseries}{\theparagraph}{1em}{}
\titlespacing*{\paragraph}
{0pt}{3.25ex plus 1ex minus .2ex}{1.5ex plus .2ex}

\usepackage{lmodern}
\usepackage{xcolor}

\def\eps{{\varepsilon}}

\newcommand{\parag}[1]{\vspace{2mm}

\noindent{\bf #1} }

\newcommand{\ignore}[1]{}

\title{Progress on Local Properties Problems of Difference Sets\thanks{This research project was done as part of the 2020 NYC Discrete Math REU, supported by NSF awards DMS-1802059, DMS-1851420, DMS-1953141, and DMS-2028892.}}
\author{Anqi Li\thanks{Massachusetts Institute of Technology, 77 Massachusetts Ave, Cambridge, MA 02139.
{\sl anqili@mit.edu.} Supported by MIT Department of Science.}}

\begin{document}
\date{} 
\maketitle

\begin{abstract}
We derive several new bounds for the problem of difference sets with local properties, such as establishing the super-linear threshold of the problem. For our proofs, we develop several new tools, including a variant of higher moment energies and a Ramsey-theoretic approach for the problem. 
\end{abstract}

\section{Introduction}

Erd\H{o}s and Shelah \cite{E75} initiated the study of graphs with local properties. 
For parameters $n,k,$ and $\ell$, they considered edge colorings of the complete graph $K_n$,  such that every $K_k$ subgraph contains at least $\ell$ colors. 
They defined $f(n,k,\ell)$ as the minimum number of colors that such an edge coloring can have. 
For example, $f(n,3,2)$ is the minimum number of colors that is required to color $K_n$, so that it does not contain a monochromatic triangle.  

The value of $f(n,k,2)$ can be thought of as an inverse of Ramsey's theorem.
In Ramsey's theorem, we look for the largest $n$ such that there exists a coloring of the edges of $K_n$ with $c$ colors and no monochromatic $K_k$. 
When studying $f(n,k,2)$, we are given the number of vertices $n$, and instead look for the number of colors $c$.  

By definition, $1 \leq f(n,k,\ell) \leq \binom{n}{2}$. 
We usually consider $n$ to be asymptotically large, while $k$ and $\ell$ are fixed constants.
For asymptotically large $n$ and a fixed $k$, the \emph{linear threshold} is the smallest $\ell$ for which $f(n,k,\ell)=\Omega(n)$.
This value of $\ell$ is expressed as a function of $k$.
Similarly, the \emph{quadratic threshold} is the smallest $\ell$ that satisfies $f(n,k,\ell)=\Theta(n^2)$.
The \emph{polynomial threshold} is the smallest $\ell$ that satisfies $f(n,k,\ell)=\Omega(n^\eps)$ for some $\eps>0$.

Erd\H{o}s and Gy\'arf\'as \cite{EG97} identified the linear and quadratic thresholds to be $\ell=\binom{k}{2} -k + 3$ and $\ell=\binom{k}{2} - \lfloor k/2 \rfloor +2$, respectively. 
In the same paper, they also observed that $f(n,k,k)$ is polynomial in $n$. Conlon, Fox, Lee and Sudakov \cite{CFLS15} proved that $f(n,k,k-1)$ is sub-polynomial in $n$, which established the polynomial threshold $\ell=k$. Beyond these thresholds, almost tight bounds for $f(n,k,\ell)$ are known in other regimes. Particularly, Pohoata and Sheffer \cite{PS19} showed that, for any integers $k > m \geq 2$,
\begin{equation}\label{eq:PS19main}
    f\left(n,k, \binom{k}{2} - m \cdot \left\lfloor \frac{k}{m+1} \right\rfloor + m + 1 \right) = \Omega \left( n^{1 + \frac{1}{m}} \right).
\end{equation}
Probabilistic constructions of Erd\H{o}s and Gy\'arf\'as \cite{EG97} show that \eqref{eq:PS19main} is asymptotically tight up to sub-polynomial factors.

Zeev Dvir recently suggested an additive combinatorics variant of the local properties problem. 
As stated in \cite[Section 9]{EG97}, Erd\H{o}s and S\'os also studied this problem, but did not publish their results.
We define the \emph{difference set} of a set $A\subset \R$ as
\[ A-A = \{a-a'\ :\ a,a'\in A\ \text{ and }\ a-a'>0\}. \]
The standard definition of a difference set does not include the condition $a-a'>0$. 
However, this less common definition makes the following local properties problem nicer to study. 

For parameters $n,k,$ and $\ell$, we consider sets $A$ of $n$ real numbers, such that every subset $A'\subset A$ of size $k$ satisfies that $|A'-A'|\ge \ell$. 
Let $g(n,k,\ell)$ be the minimum size of $\abs{A-A}$, over all such sets $A$. 
In the current work, we study $g(n,k,\ell)$. The trivial bounds for this problem are $n-1\le g(n,k,\ell)\le \binom{n}{2}$.
As before, for a fixed $k$, the quadratic threshold is the smallest $\ell$ for which $g(n,k,\ell)=\Theta(n^2)$. 
We define the \emph{super-linear threshold} as the largest $\ell$ for which $g(n,k,\ell)=O(n)$.

\parag{Previous bounds.}
Currently, not much is known about the behavior of $g(n,k,\ell)$. The unpublished work of Erd\H{o}s and S\'os proved that $g(n,4,5)\geq \binom{n}{2} - n +2$. 
We also have the bound $g(n,k,\ell)\ge f(n,k,\ell)$, for every $n,k,$ and $\ell$. 
For example, the bounds of \eqref{eq:PS19main} also hold after replacing $f(\cdot)$ with $g(\cdot)$.
Indeed, we can construct a complete graph with a vertex for every element of $A$.
The color of an edge corresponds to the difference of the numbers that correspond to the two vertices. If every subset $A'\subset A$ of size $k$ satisfies that $|A'-A'|\ge \ell$, then every subgraph $K_k$ spans at least $\ell$ colors. 
Similarly, the total number of colors is $|A-A|$. 

Fish, Pohoata, and Sheffer \cite{FPS20} observed that Roth's theorem implies the following bound.
\begin{theorem}
For every $\ep > 0$, there exists $c$ that satisfies the following. For every sufficiently large integer $k$, we have that
\[ g\left(n,k, c \cdot k \cdot \log^{1/4 - \ep}k \right) = n 2^{O(\sqrt{\log n})}. \]
\end{theorem}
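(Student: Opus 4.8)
The plan is to exhibit a single set $A$ of $n$ integers that is at once additively very efficient, so that $|A-A|$ is near-linear, and at the same time ``unstructured'' enough that every $k$-point subset already forces $\Omega\!\big(k\log^{1/4-\eps}k\big)$ distinct differences. The natural candidate is a Behrend set: by Behrend's construction (choosing $N$ so that the construction yields at least $n$ elements and passing to a subset of size exactly $n$) there is a set $A\subseteq\{1,\dots,N\}$, free of three-term arithmetic progressions, with $|A|=n$ and $N=n\,2^{O(\sqrt{\log n})}$. Then $A-A\subseteq\{1,\dots,N-1\}$, so $|A-A|\le N=n\,2^{O(\sqrt{\log n})}$, while trivially $|A-A|\ge|A|-1=n-1$; this already pins down the size. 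Every $k$-subset $A'\subseteq A$ is again progression-free, so the theorem reduces entirely to the following purely additive statement: if $A'\subseteq\Z$ has no three-term arithmetic progression and $|A'|=k$, then $|A'-A'|\ge c\,k\log^{1/4-\eps}k$ for all sufficiently large $k$.

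To prove this, write $K=|A'-A'|/k$ for the difference-doubling of $A'$; the goal is the lower bound $K\ge c\log^{1/4-\eps}k$. The idea is to \emph{localize} $A'$: a set with small difference-doubling has a faithful additive model inside a short interval, and in such a short model a quantitative Roth theorem becomes decisive. Concretely, applying the Pl\"unnecke--Ruzsa inequality with $B=-A'$ to the hypothesis $|A'+B|=|A'-A'|\le Kk$ yields $|2A'-2A'|\le K^{4}k$; it is precisely this degree-$4$ estimate that will become the exponent $\tfrac14$. Ruzsa's modeling lemma then furnishes a Freiman $2$-isomorphism from $A'$ onto a subset $\widetilde{A'}$ of $\Z/q\Z$ with $q=O(K^{4}k)$. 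A Freiman $2$-isomorphism preserves and reflects every additive relation $x_1+x_2=x_3+x_4$ — in particular the relation $a+c=2b$ defining a three-term progression — and it is injective; hence $\widetilde{A'}$ has no three-term progression in $\Z/q\Z$ (a progression in $\widetilde{A'}$ would pull back to a genuine non-degenerate integer progression in $A'$), and $|\widetilde{A'}|=k$.

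Now invoke a quantitative Roth theorem in the cyclic group — e.g.\ the bound that a progression-free subset of $\Z/q\Z$ has size $O\!\big(q(\log\log q)^{4}/\log q\big)$, which follows from Bloom's estimate. Applied to $\widetilde{A'}$ this gives $k\le C\,q(\log\log q)^{4}/\log q$; substituting $q=O(K^{4}k)$ and using $\log q\ge\log k$ together with $\log\log q=\log\log k+O(1)$, this rearranges to $K^{4}\ge \log k\big/\big(C'(\log\log k)^{4}\big)$, i.e.\ $K\ge (\log k)^{1/4}\big/\big(C''\log\log k\big)$. Since $(\log k)^{\eps}\ge\log\log k$ once $k$ is large in terms of $\eps$, this is at least $(\log k)^{1/4-\eps}/C''$, so $|A'-A'|=Kk\ge c\,k\log^{1/4-\eps}k$ with $c=1/C''$. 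This establishes the additive statement, and hence the theorem: the Behrend set $A$ constructed at the outset witnesses $g\!\big(n,k,c\,k\log^{1/4-\eps}k\big)\le n\,2^{O(\sqrt{\log n})}$, and together with $g\ge n-1$ this is an equality.

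There is no single hard step here; the work is in keeping the quantitative chain honest. The point most deserving of care is that it is a \emph{difference}-set hypothesis that must be fed into a model of polynomial length $q=O(K^{4}k)$, with the exponent $4$ being exactly what becomes $\tfrac14$ — any loss in this modeling step propagates directly to the main bound. One must also check that the $(\log\log)$-type losses in the available Roth bounds are mild enough to be absorbed by the slack $-\eps$ (with a Behrend-matching Roth bound the $-\eps$ could in principle be dropped), and that the modeling lemma and the transfer of progression-freeness are arranged so that no degenerate progressions are introduced in either direction. A minor choice is whether to model in $\Z/q\Z$ or in an interval $\{1,\dots,q\}$, with the corresponding form of Roth's theorem; either works, but the cyclic version chains most smoothly.
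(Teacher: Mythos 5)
This statement is not proved in the paper at all: it is quoted as an observation of Fish, Pohoata and Sheffer \cite{FPS20}, so there is no internal proof to compare against. Your argument is a correct, self-contained derivation along exactly the route the citation has in mind: a Behrend set gives $|A-A|\le N=n2^{O(\sqrt{\log n})}$, and the local property reduces to showing that a $3$-AP-free set $A'$ of size $k$ has difference set of size $\gtrsim k(\log k)^{1/4-\ep}$, which you get from the chain Pl\"unnecke--Ruzsa ($|2A'-2A'|\le K^4k$), Ruzsa modeling into $\Z/q\Z$ with $q=O(K^4k)$, and a Bloom/Sanders-type quantitative Roth bound, the exponent $4$ in Pl\"unnecke--Ruzsa being precisely the source of the $1/4$.

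Two small points of hygiene, neither of which damages the argument. First, Ruzsa's modeling lemma for $s=2$ does not give a Freiman $2$-isomorphism of all of $A'$; it gives a subset $A_1\subseteq A'$ with $|A_1|\ge k/2$ that is $2$-isomorphic to a subset of $\Z/q\Z$ with $q\le |2A'-2A'|$. Since $A_1$ is still progression-free, running Roth on $A_1$ instead of $A'$ only costs a factor $2$, which your constants absorb, but the lemma should be invoked in that form. Second, this paper's difference set keeps only positive differences, so the quantity in the local property is about half of the symmetric difference set your additive argument bounds; again this is only a constant, absorbed into $c$. (Your step $\log\log q=\log\log k+O(1)$ is fine once one notes the trivial bound $K\le k$, hence $q=O(k^5)$.) With these cosmetic fixes the proposal is a complete proof of the stated bound.
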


Fish, Lund, and Sheffer \cite{FLS19} obtained the following result by projecting a hypercube onto a line.

\begin{theorem} \label{thm:FLS19-g}
    For every $n > k \geq 0$ where $n$ is a power of two, we have that 
    \[  g\left(n,k, \frac{k^{\log_2(3)}-1}{2} \right) = O\left( n^{\log_2(3)}\right). \]
\end{theorem}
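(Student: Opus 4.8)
The plan is to exhibit a single set $A$ of $n$ real numbers that has the required local property and satisfies $|A-A| = O(n^{\log_2 3})$; since $g(n,k,\ell)$ is the minimum of $|A-A|$ over admissible $A$, this proves the theorem. Put $d = \log_2 n$ and let $A$ be the image of the hypercube $\{0,1\}^d$ under a projection to a line, which we realize concretely via base $3$:
\[
A \ =\ \Bigl\{\, \textstyle\sum_{i=0}^{d-1}\epsilon_i\,3^i \ :\ \epsilon=(\epsilon_0,\dots,\epsilon_{d-1})\in\{0,1\}^d \,\Bigr\}.
\]
Then $|A|=2^d=n$. Since balanced-ternary representations are unique, $\delta\mapsto\sum_i\delta_i 3^i$ is injective on $\{-1,0,1\}^d$, so the positive differences of $A$ biject with the ``positive half'' of $\{-1,0,1\}^d$ and $|A-A| = (3^d-1)/2 = (n^{\log_2 3}-1)/2 = O(n^{\log_2 3})$, as needed.

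It remains to verify the local property. A $k$-element subset $A'\subseteq A$ is the image of a $k$-element set $S\subseteq\{0,1\}^d$, and by the same injectivity the positive differences of $A'$ biject with the nonzero vectors of the (symmetric, $0$-containing) vector difference set $S-S\subseteq\{-1,0,1\}^d$ lying in the positive half; hence $|A'-A'| = \tfrac12(|S-S|-1)$. So it suffices to prove the following \emph{cube estimate}: for every $d$ and every finite $S\subseteq\{0,1\}^d$, $|S-S|\ge |S|^{\log_2 3}$. This is sharp (for $S=\{0,1\}^d$ we get $|S-S|=3^d=|S|^{\log_2 3}$), which is exactly where the exponent $\log_2 3$ comes from.

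To prove the cube estimate I would establish the stronger two-set bound $|U-V|\ge(|U|\,|V|)^{(\log_2 3)/2}$ for all $U,V\subseteq\{0,1\}^d$ (the case $U=V$ gives the estimate) by induction on $d$. For the inductive step, split on the last coordinate: if $U_0,U_1\subseteq\{0,1\}^{d-1}$ are the two slices of $U$ and $V_0,V_1$ those of $V$, then
\[
|U-V| \ =\ \bigl|(U_0-V_0)\cup(U_1-V_1)\bigr| + |U_1-V_0| + |U_0-V_1| \ \ge\ \max_i|U_i-V_i| + |U_1-V_0| + |U_0-V_1|.
\]
Feeding the inductive hypothesis into the four difference sets on the right, the claim reduces, with $s=\tfrac12\log_2 3$ and $p_i=|U_i|$, $q_j=|V_j|$, to the elementary inequality
\[
\max\!\bigl((p_0q_0)^s,(p_1q_1)^s\bigr) + (p_1q_0)^s + (p_0q_1)^s \ \ge\ \bigl((p_0+p_1)(q_0+q_1)\bigr)^s
\]
for all non-negative reals $p_0,p_1,q_0,q_1$. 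Both sides are invariant under swapping the labels $0$ and $1$ in the $p$'s and $q$'s simultaneously, so we may assume $p_0q_0\ge p_1q_1$ (the degenerate case $p_0q_0=0$ being trivial); dividing through by $(p_0q_0)^s$ and writing $u=p_1/p_0$, $v=q_1/q_0$ (so $uv\le1$), and then using the $u\leftrightarrow v$ symmetry, one reduces further to the single-variable inequality $1+t^s+t^{\log_2 3}\ge(1+t)^{\log_2 3}$ on $t\in[0,1]$, which holds by elementary calculus: both sides agree at $t=0$ and $t=1$, and a short derivative/unimodality analysis handles the interior.

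I expect the elementary inequality to be the main obstacle. It is tight at several points — equality at $t\in\{0,1\}$ and, for the two-variable version, at $p_0/p_1=q_0/q_1=1$, the subcube configuration — so there is essentially no slack and the calculus must be carried out carefully. Conceptually the real point is choosing the right quantity to induct on: the crude bound $|U_1-V_0|\ge\max(|U_1|,|V_0|)$ loses a constant factor per coordinate and the induction collapses, which is what forces the multiplicative two-set bound above, precisely the form that tensorizes along the coordinate split.
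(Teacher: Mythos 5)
Your proposal is essentially the paper's route to this statement: Theorem~\ref{thm:FLS19-g} is quoted here from the work of Fish, Lund, and Sheffer, described as ``projecting a hypercube onto a line,'' and the only piece this paper reproves is the easy half, Lemma~\ref{le:cubeDiffs}, which your base-$3$ realization of the projected cube matches with equality, $|A-A|=(n^{\log_2 3}-1)/2$. What you add beyond what appears in this paper is a self-contained verification of the local property, i.e.\ the cube estimate $|S-S|\ge |S|^{\log_2 3}$ for $S\subseteq\{0,1\}^d$, via the tensorizing two-set bound $|U-V|\ge(|U||V|)^{s}$ with $s=\tfrac12\log_2 3$; that is exactly the content the paper outsources to the citation, and your coordinate-split identity $|U-V|=|(U_0-V_0)\cup(U_1-V_1)|+|U_1-V_0|+|U_0-V_1|$ together with the induction is a legitimate way to get it. Two steps are glossed over, though both are fixable. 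First, the ``$u\leftrightarrow v$ symmetry'' does not by itself reduce $1+u^s+v^s\ge(1+u)^s(1+v)^s$ on $\{u,v\ge 0,\ uv\le 1\}$ to one variable; what does is a monotonicity observation: for fixed $u$ the $v$-derivative of the difference, $sv^{s-1}-s(1+u)^s(1+v)^{s-1}$, changes sign at most once (from positive to negative), so the minimum over $v\in[0,1/u]$ is attained at $v=0$, where the inequality is immediate from subadditivity of $x\mapsto x^s$, or at $v=1/u$, which after clearing denominators is precisely your single-variable inequality. Second, that inequality $1+t^s+t^{2s}\ge(1+t)^{2s}$ on $[0,1]$ is true but genuinely tight: both sides agree at $t=0$ and $t=1$, the first derivatives also agree at $t=1$, and positivity near $t=1$ comes from the second-derivative computation $\phi''(1)=s(\log_2 3-\tfrac32)>0$, so the interior analysis you defer to ``elementary calculus'' still has to be carried out carefully; numerically the margin is small (e.g.\ about $10^{-2}$ at $t=1/2$ and $3\cdot 10^{-4}$ at $t=0.9$), so this is the crux, exactly as you anticipate. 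With those two points written out, your argument is complete and recovers the theorem with the same construction the paper cites.
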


\parag{Our results.}
We derive several new bounds for $g(n,k,\ell)$. 
First, we identify the super-linear threshold of $g(n,k,\ell)$.
Recall that $\omega(\cdot)$ means ``asymptotically strictly larger than''.

\begin{theorem}\label{th:SuperLinear}
For every $k$ and sufficiently large $n$, we have that
\begin{align*} 
g(n,k,k-1) = n-1 \qquad \text{ and } \qquad g(n,k,k) =\omega(n).
\end{align*}
\end{theorem}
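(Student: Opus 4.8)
This is immediate from the trivial bounds. Writing $A=\{a_1<\dots<a_n\}$, the $n-1$ numbers $a_n-a_1>a_n-a_2>\dots>a_n-a_{n-1}>0$ are distinct elements of $A-A$, so $|A-A|\ge n-1$ for every $A$; applied to a $k$-element subset, the same observation shows $|A'-A'|\ge k-1$ for every $k$-set $A'$. Hence $g(n,k,k-1)\ge n-1$, and the set $A=\{1,\dots,n\}$, for which every $k$-subset is automatically admissible, gives $g(n,k,k-1)\le n-1$.

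\textbf{Part 2: $g(n,k,k)=\omega(n)$.} We may assume $k\ge 3$ (for $k\le 2$ there is no admissible set and $g(n,k,k)=\infty$). The whole statement reduces to the following. \emph{Claim:} for every $C$ and $k$ there is $n_0(C,k)$ such that every $A\subseteq\R$ with $|A|=n\ge n_0$ and $|A-A|\le Cn$ contains a $k$-term arithmetic progression. Granting this, Part 2 follows: if $A$ is admissible for $(n,k,k)$ then no $k$-subset of $A$ has fewer than $k$ differences, but a $k$-term progression is a $k$-subset with exactly $k-1$ differences, so $A$ contains none; by the Claim this forces $|A-A|>Cn$ once $n\ge n_0(C,k)$. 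As $C$ is arbitrary, $g(n,k,k)/n\to\infty$.

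\textbf{Proof plan for the Claim.} The finitely many differences of $A$ generate a finitely generated, hence free, subgroup of $\R$, so after an additive isomorphism we may work inside a torsion-free abelian group. Since $|A-A|\le C|A|$, standard sumset inequalities (Ruzsa's triangle inequality and Pl\"unnecke--Ruzsa) give $|A+A|\le C'|A|$ with $C'=C'(C)$, so Freiman's theorem places $A$ inside a proper generalized arithmetic progression $P=\phi(Q)$, where $Q=\prod_{i=1}^{d}\{0,1,\dots,L_i-1\}$, $d\le d(C)$, the map $\phi$ is affine and injective on $Q$, and $|P|=|Q|\le f(C)\,|A|$. Thus $A_0:=\phi^{-1}(A)$ has density $\ge\delta:=1/f(C)$ in the box $Q$. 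Fix $N_0=N_0(\delta,k)$ large enough that Szemer\'edi's theorem produces a $k$-term progression in any density-$\ge\delta$ subset of $\{0,\dots,N-1\}$ with $N\ge N_0$. While some side length of the current box is $<N_0$, fix that coordinate to a value at which $A_0$ meets the corresponding slice with at least its average density: each step lowers the dimension by one, keeps the relative density $\ge\delta$, and leaves the slice an affine image under a restriction of $\phi$. If all sides are removed this way then $|Q|<N_0^{\,d(C)}$, hence $n<N_0^{\,d(C)}$, impossible once $n_0$ exceeds this constant. So some side, say the $j$-th, survives with $L_j\ge N_0$; averaging over the lines of the box in direction $e_j$ yields a line on which the set still has density $\ge\delta$, and Szemer\'edi's theorem gives a progression $v,v+te_j,\dots,v+(k-1)te_j$ in it with $t\ge1$. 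Its image under $\phi$ is an arithmetic progression inside $A$, and it is non-degenerate because $\phi$ is injective on $Q$, so the common difference $t(\phi(e_j)-\phi(0))$ is nonzero.

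\textbf{The main obstacle.} The only substantial ingredient is the structural step: a small difference set forces containment in a bounded-dimension generalized arithmetic progression of size $O(n)$. Everything afterwards --- the dimension reduction for lopsided boxes, the averaging onto a line, and the pull-back of the progression --- is routine, the one point requiring care being the non-degeneracy of the resulting progression, which is exactly why one passes to a \emph{proper} progression. I should stress that this route yields only the qualitative bound $\omega(n)$; upgrading it to a polynomial lower bound for $g(n,k,k)$ would require quantitatively efficient forms of these tools, or a different (e.g.\ Ramsey-theoretic) argument altogether.
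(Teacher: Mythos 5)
Your proposal is correct and follows essentially the same route as the paper: $\{1,\dots,n\}$ settles $g(n,k,k-1)=n-1$, and for the second part you use Freiman's theorem to place any set with $|A-A|\le Cn$ inside a bounded-dimension generalized arithmetic progression of size $O(n)$, locate a dense one-dimensional sub-progression, and apply Szemer\'edi's theorem to extract a $k$-term arithmetic progression, which spans only $k-1$ differences and so violates the local property. The only cosmetic difference is that the paper works with a long step-$b_1$ sub-progression of the GAP and invokes Gowers' quantitative version of Szemer\'edi's theorem (which also gives the quantitative remark following the proof), whereas you use properness of the GAP plus a slice-averaging argument and plain Szemer\'edi, reaching the same qualitative $\omega(n)$ conclusion.
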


After establishing the super-linear threshold, it is natural to ask how quickly the bound changes as $\ell$ grows. We obtain  the following bound via a probabilistic argument. 

\begin{theorem} \label{th:ProbConst}
For every $c\ge 2$ and integer $k> (c^2+1)^2$, we have that
\[ g(n, k, ck+1) =O\Big(  n^{1 + \frac{c^2+1}{k}}\Big). \]
\end{theorem}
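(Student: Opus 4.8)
The plan is to use the deletion method on a random subset of an interval, with a Freiman-type bound on the number of ``locally poor'' $k$-subsets playing the central role. Fix an integer $m$ and a probability $p$, both to be chosen, and let $A_0$ be obtained by including each element of $[m]=\{1,\dots,m\}$ independently with probability $p$. Then $\mathbb{E}|A_0|=pm$, while $A_0-A_0\subseteq\{1,\dots,m-1\}$, so $|A_0-A_0|\le m-1$ always. Call a $k$-subset $S\subseteq[m]$ \emph{bad} if $|S-S|\le ck$, and let $B$ denote the number of bad $k$-subsets of $[m]$. A fixed $k$-set lies in $A_0$ with probability $p^k$, so by linearity $\mathbb{E}\bigl[\,|A_0|-\#\{\text{bad }k\text{-subsets contained in }A_0\}\,\bigr]\ge pm-Bp^k$. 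If $p$ is chosen so that $Bp^{k-1}\le m/2$, this expectation is at least $pm/2$; fixing an outcome attaining it and deleting one element from each bad $k$-subset produces a set $A$ with $|A|\ge pm/2$, with $|A-A|\le m-1$, and with every $k$-subset of $A$ having strictly more than $ck$ positive differences. Passing to a subset, this shows $g(n,k,ck+1)\le m-1$ for every $n\le pm/2$.

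\textbf{Counting bad sets.} The crux is the estimate $B\le C_1\,m^{c^2+1}$ for a constant $C_1=C_1(c,k)$ independent of $m$. A bad set $S$ has $|S-S|\le c\,|S|$, hence bounded doubling; after clearing denominators we may take $S\subseteq\mathbb{Z}$, and combining the Plünnecke--Ruzsa inequalities with Freiman's dimension lemma $|S-S|\ge (d+1)|S|-\binom{d+1}{2}$ (valid when $S$ has Freiman dimension $d$) forces $d\le c^2$, the hypothesis $k=|S|>(c^2+1)^2$ ensuring the lower-order terms are negligible. By Freiman's theorem over $\mathbb{Z}$, $S$ is then contained in a proper generalized arithmetic progression of rank at most $c^2$ and size at most $C_0(c)\,k$. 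The number of such progressions inside $[m]$ is at most $m\cdot(2m)^{c^2}\cdot(C_0(c)k)^{c^2}$ — a base point, at most $c^2$ generators, and at most $c^2$ side lengths — and each contains at most $\binom{C_0(c)k}{k}\le(e\,C_0(c))^k$ subsets of size $k$; multiplying gives the claimed bound on $B$.

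\textbf{Choosing parameters.} With $B\le C_1 m^{c^2+1}$, the requirement $Bp^{k-1}\le m/2$ holds for $p=c_0\,m^{-c^2/(k-1)}$ with an appropriate constant $c_0=c_0(c,k)$ (note $k-1-c^2>0$ since $k>(c^2+1)^2$). Then $pm/2=\tfrac{c_0}{2}m^{(k-1-c^2)/(k-1)}$, so choosing $m$ to be the least integer with this quantity at least $n$ gives $m=O\bigl(n^{(k-1)/(k-1-c^2)}\bigr)$ and hence $g(n,k,ck+1)\le m-1=O\bigl(n^{(k-1)/(k-1-c^2)}\bigr)$. Finally $\tfrac{k-1}{k-1-c^2}=1+\tfrac{c^2}{k-1-c^2}$, and $\tfrac{c^2}{k-1-c^2}\le\tfrac{c^2+1}{k}$ is equivalent, after cross-multiplying, to $k\ge(c^2+1)^2$; as $k>(c^2+1)^2$ this yields $g(n,k,ck+1)=O\bigl(n^{1+(c^2+1)/k}\bigr)$.

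\textbf{Main obstacle.} The delicate point is the structural input: one really needs a \emph{constant}-rank (rank at most $c^2$) generalized arithmetic progression of size only $O_c(k)$ containing a bad set. A hands-on argument that builds a bad set one element at a time, charging newly created differences, only limits the number of ``free'' choices to $O(\sqrt{ck})$, which translates into the exponent $1+O(\sqrt{c/k})$ — too weak. It is precisely the Freiman-type rank bound, combined with the progression size being linear in $k$ (so that each progression contributes only $C^k$ bad subsets, a factor harmless against $p^{k-1}$), that produces the stated exponent and explains the threshold $k>(c^2+1)^2$. Granting this, the deletion argument and the parameter optimization are routine.
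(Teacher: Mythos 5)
Your overall strategy coincides with the paper's: take a random subset of a long interval, bound the expected number of ``bad'' $k$-subsets (those with at most $ck$ positive differences), delete a point from each, and optimize the parameters; the arithmetic at the end, including the role of the hypothesis $k>(c^2+1)^2$, matches the paper's exactly. The genuine difference is the counting input. The paper bounds the number of bad $k$-subsets of the ground interval by passing to sumsets via Pl\"unnecke--Ruzsa and invoking the Green--Morris theorem on the number of sets with $|B+B|\le c^2|B|$, which gives $O_{c,k}(m^{c^2+1})$ directly. You instead derive the same $O_{c,k}(m^{c^2+1})$ bound from Freiman--Chang structure theory: bound the Freiman dimension by $c^2$ via the dimension lemma, place each bad set in a generalized arithmetic progression of rank at most $c^2$ and size $O_c(k)$, and enumerate such progressions and their $k$-subsets. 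Your route trades the (fairly heavy, and sharper than needed) Green--Morris counting theorem for the classical Freiman machinery with explicit rank control; both yield the same exponent, and your alteration argument (expectation plus deletion) even avoids the Chernoff step the paper uses.

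One step of your counting needs repair, though it is fixable. Freiman's theorem gives a progression $P\supseteq S$, but $P$ need not lie inside $[m]$, and its base point and generators need not be bounded by $m$ (e.g.\ a rank-two progression with two enormous generators of opposite sign can contain a pair of small integers), so the enumeration ``a base point in $[m]$, generators in $(-m,m)$, side lengths at most $C_0(c)k$'' is not justified as written. A clean fix: after reducing the rank of $P$ to the affine (Freiman) dimension $r\le c^2$ of $S$, choose $r+1$ affinely independent elements of $S$; the lattice pattern of $S$ inside $P$ admits only $O_{c,k}(1)$ possibilities, and once the pattern and the images in $[m]$ of these $r+1$ anchor points are fixed, the parameters of $P$ --- and hence $S$ itself --- are determined. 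This counts bad sets by $O_{c,k}(1)\cdot m^{r+1}\le O_{c,k}(m^{c^2+1})$ without ever bounding the generators, and the rest of your argument then goes through as stated. (Alternatively, cite a version of Freiman's theorem in which the containing progression lies in a bounded combination of $S$ with itself.) Note also that you really do need the rank bound $c^2$ coming from the dimension lemma rather than the rank bound coming from the doubling constant alone, since the final inequality $rk\le(c^2+1)(k-1-r)$ has no slack; your write-up does route through the dimension, so this is consistent, but it should be said explicitly that the progression's rank is taken at most the Freiman dimension.
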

When fixing $c\ge2$, the bound of Theorem \ref{th:ProbConst} becomes arbitrarily close to $O(n)$ as $k$ grows.
We note that the assumption $k> (c^2+1)^2$ can be removed from the statement of this theorem. Indeed, when $k\le (c^2+1)^2$, the bound of the theorem is trivial.

Additive combinatorics provides many tools for studying sets that have a small difference set. 
We rely on such tools to study the case of small $\ell$, in the proofs of Theorems \ref{th:SuperLinear} and \ref{th:ProbConst}.
To study sets with larger values of $\ell$, we have to develop new tools. In particular, we obtain the following result by studying a new variant of higher moment energies. For the definition of higher moment energies and of our new variant, see Section \ref{sec:Dumbbell}.

\begin{theorem} \label{th:Lower43}
Every $k\ge 8$ that is a multiple of 8 satisfies  
\[ g\left(n, k, \frac{9k^2}{32} - \frac{9k}{8} + 5 \right) = \Omega(n^{4/3}). \]
\end{theorem}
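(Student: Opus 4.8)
The plan is to argue by contradiction: suppose $|A-A| = o(n^{4/3})$ along a sequence of admissible sets $A$ with $|A| = n \to \infty$, and extract from each such $A$ a $k$-element subset $A'$ with $|A'-A'| \le \ell - 1$, contradicting the local property. The engine is the additive energy $E(A) = \sum_x r(x)^2$, where $r(x) = \#\{(a,b)\in A^2 : a-b = x\}$ — the simplest of the higher moment energies. The key twist (the ``dumbbell'' variant of Section~\ref{sec:Dumbbell}) is to view the additive quadruples counted by $E(A)$ not as pairs of pairs sharing a difference, but as \emph{additive squares}: $4$-tuples $\{a,\,a+d,\,a+e,\,a+d+e\}\subseteq A$ with directions $d,e$.

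First I would record the identity
\[ \sum_{d,e} \#\{a \in \R : a,\ a+d,\ a+e,\ a+d+e \in A\} \;=\; E(A), \]
obtained by writing the inner count, for fixed $d$, as $|A\cap(A-d)|^2 = r(d)^2$ and summing. By Cauchy--Schwarz, $E(A) \gg |A|^4/|A-A| = \omega(n^{8/3})$ under our assumption. The degenerate contributions (pairs with $d=0$, $e=0$, or $d=\pm e$) total only $O(n^2)$, while the number of pairs $(d,e)$ with $r(d),r(e)>0$ is $O(|A-A|^2) = o(n^{8/3})$. Pigeonholing over the non-degenerate pairs therefore yields directions $d,e$ with $0,d,e,d+e$ distinct for which $S_0 := \{a : a,\ a+d,\ a+e,\ a+d+e \in A\}$ has size $\omega(1)$; in particular $|S_0| \ge k/4$ once $n$ is large (the divisibility hypothesis on $k$ ensures $k/4\in\Z$).

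Next I would pass to a subset $S\subseteq S_0$ with $|S| = k/4$ whose four translates $S,\ S+d,\ S+e,\ S+d+e$ are pairwise disjoint. Setting $O = \{0,d,e,d+e\}$ and $\Delta = O-O = \{0,\pm d,\pm e,\pm(d+e),\pm(d-e)\}$, two elements of $S_0$ ``conflict'' precisely when their difference lies in $\Delta\setminus\{0\}$, a set of at most $8$ values; hence the conflict graph on $S_0$ has maximum degree at most $8$ and contains an independent set of size $\ge |S_0|/9 \ge k/4$. For such an $S$, the set $A' := S\cup(S+d)\cup(S+e)\cup(S+d+e)$ is contained in $A$ and has exactly $k$ elements. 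Finally, $A'-A' = \bigcup_{\delta\in\Delta}\big((S-S)+\delta\big)$ with $|\Delta|\le 9$ and $|S-S|\le |S|^2-|S|+1$, so $|A'-A'| \le 9\big(|S|^2-|S|+1\big)$, whence
\[ |A'-A'|_{+} \;\le\; \frac{9|S|^2 - 9|S| + 8}{2} \;=\; \frac{9k^2}{32} - \frac{9k}{8} + 4 \;=\; \ell - 1, \]
contradicting the local property.

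I expect the substantive point to be not any single estimate above — all of them are short — but the \emph{choice} of configuration. Using additive squares $\{a,a+d,a+e,a+d+e\}$ is exactly what lets the whole argument run on the second moment energy $E(A)$ alone, via one round of pigeonholing, and it is what produces the constant $9/32$. A more efficient configuration (for instance four translates in arithmetic progression, which would replace $|\Delta|\le 9$ by $|\Delta|\le 7$ and improve the constant to $7/32$) seems to need $3$-term-progression structure inside $A$, i.e.\ Roth-type input with much worse quantitative dependence; likewise, pushing past the exponent $4/3$ would require genuinely higher moment energies and a more delicate extraction. The remaining work is bookkeeping: isolating the degenerate direction-pairs, carrying out the independent-set step, and checking that the constant lands at $\ell-1$ exactly.
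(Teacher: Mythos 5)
Your proposal is correct, but it runs on genuinely lighter machinery than the paper's proof. Note first that your configuration is the paper's in disguise: the parallelogram $S+\{0,d,e,d+e\}$ with $|S|=k/4$ is exactly $k/4$ disjoint congruent dumbbells (a dumbbell of class $D(d,d')$ is $a+\{0,d\}+\{0,d+d'\}$), and both counts land on the same number $4+9\binom{k/4}{2}=\frac{9k^2}{32}-\frac{9k}{8}+4=\ell-1$, so the extremal subset is identical. The difference is how the popular congruence class is found. The paper introduces the $(k/4)$th dumbbell energy, proves the lower bound of Lemma~\ref{le:DumbbellLower} via Loomis--Whitney, Cauchy--Schwarz and H\"older, and bounds the energy above by $O_k(|A-A|^2)$ through an energy graph whose components are cliques of bounded size (the clique bound coming from the forbidden configuration plus a ``few non-disjoint congruent dumbbells'' count). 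You stay at the second moment: the identity $\sum_{d,e}N(d,e)=E(A)\gg n^4/|A-A|$, removal of the $O(n^2)$ degenerate direction pairs, and a pigeonhole over the $O(|A-A|^2)$ admissible pairs already give one class with $\omega(1)\ge 9k/4$ parallelograms when $|A-A|=o(n^{4/3})$ (equivalently, in the paper's notation, $\max_{d,d'} r^D_A(d,d')\ge S/O(|A-A|^2)=\Omega(n^4/|A-A|^3)$, i.e.\ \eqref{eq:sumDumbbells} combined with the class count); your bounded-degree conflict graph ($\le 8$ forbidden differences, independent set of size $\ge|S_0|/9$) then extracts the disjoint copies, and your conversion between the symmetric and positive-difference conventions is done correctly, so the contradiction with the local property is exact. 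This route is shorter, avoids the high-moment energy and the clique decomposition entirely, and in fact only needs $4\mid k$ rather than $8\mid k$; what the paper's heavier formalism buys is the general energy framework (e.g.\ the double-dumbbell variant of Remark~\ref{rmk:higher-dumbbell}), but for Theorem~\ref{th:Lower43} itself your argument delivers the same exponent $4/3$ and the same constant $9/32$.
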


By \eqref{eq:PS19main}, for the graphs local properties problem, we have that
\begin{equation*}
    f\left(n,k, \binom{k}{2} - 3 \cdot \left\lfloor \frac{k}{4} \right\rfloor + 4 \right) = \Omega \left( n^{4/3} \right).
\end{equation*}
This bound is tight up to subpolynomial factors. Theorem \ref{th:Lower43} shows that, in the additive combinatorics variant, a much weaker local restriction leads to $\Omega(n^{4/3})$, up to subpolynomial factors.  
Unlike the graph variant, in this case the coefficient of $k^2$ is smaller than $1/2$.

Theorem \ref{thm:FLS19-g}  provides an upper bound for the local properties problem by introducing a construction. Surprisingly, we show that the same construction can be used to obtain a proof for a lower bound. 

\begin{theorem} \label{th:CubeLower}
For every $k \geq 8$ that is a power of two and sufficiently large $n$, we have that
\[ g\left(n,k, \frac{k^{\log_2(3)}+1}{2} \right) = \Omega \left( n^{1+\frac{2}{k-2} }\right).\]
\end{theorem}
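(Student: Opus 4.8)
The plan is to argue in contrapositive form: if $A\subseteq\R$ with $|A|=n$ has the property that every $k$-element subset of $A$ spans at least $\ell=\tfrac{k^{\log_2 3}+1}{2}$ differences, I will show $|A-A|=\Omega\big(n^{1+2/(k-2)}\big)$. Write $k=2^t$, and for reals $x,d_1,\dots,d_j$ call $C(x;d_1,\dots,d_j)=\{\,x+\sum_{i\in S}d_i:\ S\subseteq[j]\,\}$ a \emph{$j$-cube in $A$} if all its $2^j$ listed points belong to $A$, and a \emph{non-degenerate} $j$-cube if moreover these $2^j$ points are pairwise distinct. The reason the construction behind Theorem~\ref{thm:FLS19-g} reappears is the elementary observation that every difference of two points of such a cube has the form $\sum_i v_i d_i$ with $v\in\{-1,0,1\}^t$; since that expression is odd in $v$ and vanishes at $v=0$, a non-degenerate $t$-cube is a set of exactly $k=2^t$ elements of $A$ spanning at most $\tfrac{3^t-1}{2}=\tfrac{k^{\log_2 3}-1}{2}=\ell-1$ positive differences. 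So a set $A$ with the stated local property contains \emph{no} non-degenerate $t$-cube, and it remains to show that this absence forces $|A-A|$ to be large.

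For $0\le j\le t$ let $P_j$ be the number of non-degenerate $j$-cube structures $(x;d_1,\dots,d_j)$ in $A$, so $P_0=n$ and $P_1=n(n-1)$, and put $M=|\{a-a':a,a'\in A\}|\le 2|A-A|+1$. The heart of the argument is the recursion
\[
P_j\ \ge\ \frac{P_{j-1}^{\,2}}{M^{\,j-1}}\ -\ 3^{\,j-1}\,P_{j-1},\qquad 2\le j\le t .
\]
To establish it, note that a non-degenerate $j$-cube is exactly a non-degenerate $(j-1)$-cube $C$ together with a shift $d$ satisfying $C+d\subseteq A$ and $C\cap(C+d)=\emptyset$; the second condition excludes only the at most $3^{j-1}$ values $d\in C-C$, so $P_j\ge\sum_C\big(|\{d:C+d\subseteq A\}|-3^{j-1}\big)$, the sum over non-degenerate $(j-1)$-cubes $C$. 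I would then group these cubes by their \emph{shape} $(d_1,\dots,d_{j-1})$: any occurring shape has all $d_i\in A-A$, so at most $M^{j-1}$ shapes occur, and for a shape with no vanishing signed partial sum $\sum_{i\in U}\pm d_i=0$ over a nonempty $U\subseteq[j-1]$, every base point $x$ completing it to a cube in $A$ gives a non-degenerate cube (distinctness of the $2^{j-1}$ points is precisely the non-vanishing of these signed sums). Letting $g(d_1,\dots,d_{j-1})$ count the completing base points of a given (good) shape, one has $P_{j-1}=\sum_{\text{shapes}}g(\cdot)$, and since $C+d$ has the same shape as $C$ also $\sum_C|\{d:C+d\subseteq A\}|=\sum_{\text{shapes}}g(\cdot)^2$; Cauchy--Schwarz over the $\le M^{j-1}$ shapes gives $\sum g(\cdot)^2\ge P_{j-1}^2/M^{j-1}$, and subtracting $3^{j-1}P_{j-1}$ yields the recursion.

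Unwinding the recursion gives, by induction on $j$, a bound $P_j\ge \beta_j\, n^{2^j}\, M^{-(2^j-j-1)}$ with constants $\beta_j>0$ depending only on $j$, valid as long as the error term is dominated at each step, i.e. $P_{j-1}/M^{j-1}\ge 2\cdot 3^{j-1}$. Feeding in the inductive bound, that holds for all $j\le t$ once $M\le c(t)\,n^{k/(k-2)}$ for a suitably small constant $c(t)>0$ and $n$ large: the constraint imposed at step $j$ is that $M$ be at most a constant times $n^{2^{j-1}/(2^{j-1}-1)}$, and the binding case is the last step $j=t$, where $2^{t-1}/(2^{t-1}-1)=k/(k-2)$, the earlier steps having strictly larger exponents of $n$. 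Under this hypothesis $P_t\ge \beta_t\,n^{k}\,M^{-(k-t-1)}>0$, contradicting the absence of non-degenerate $t$-cubes; hence $M>c(t)\,n^{k/(k-2)}$, and since $|A-A|\ge (M-1)/2$ the bound $g\big(n,k,\tfrac{k^{\log_2 3}+1}{2}\big)=\Omega\big(n^{1+2/(k-2)}\big)$ follows. The delicate point — the main obstacle — is the recursion: the Cauchy--Schwarz must be run over \emph{non-degenerate} shapes so that the denominator is exactly $M^{j-1}$ and non-degeneracy of the built cube is automatic. Counting all $j$-cubes and deleting degenerate ones only at the end costs a constant in the exponent and gives the weaker $\Omega\big(n^{1+1/(k-2)}\big)$, so the careful handling of degeneracy throughout is what makes the exponent $1+2/(k-2)$ come out.
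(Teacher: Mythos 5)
Your proof is correct, but it is genuinely different from the paper's. The paper also reduces the theorem to showing that any $n$-element set with $|A-A|\le n^{1+2/(k-2)}/9$ must contain a projected cube (its Lemma on projected-$i$-cubes plays the role of your $\{-1,0,1\}$-combination count), but it finds the cube by a greedy popular-difference iteration: at stage $i$ it maintains roughly $n^{\eta_i}$ \emph{pairwise disjoint translated copies} of a single projected-$i$-cube, applies the pigeonhole to differences between the left/right \emph{endpoints} of distinct copies, merges same-type pairs into $(i+1)$-cubes, and restores disjointness by a matching argument, with the recursion $\eta_{i+1}=2\eta_i-1-2/(k-2)$ driving the exponent. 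You instead count all non-degenerate Hilbert-cube structures $P_j$ and run Cauchy--Schwarz over the at most $M^{j-1}$ shapes (a Szemer\'edi-cube-lemma-style second-moment argument), absorbing the degeneracy loss $3^{j-1}P_{j-1}$ at every step; the binding constraint at $j=t=\log_2 k$ gives exactly $M\lesssim n^{k/(k-2)}$, i.e.\ the stated exponent. The comparison is instructive: your route produces a full non-degenerate $\log_2(k)$-dimensional cube, i.e.\ an honest $k$-element subset spanning at most $(k^{\log_2 3}-1)/2$ differences, directly at the threshold $n^{1+2/(k-2)}$, whereas the paper's endpoint-pigeonhole induction as written only reaches projected-$(\log_2(k)-2)$-cubes (which have $k/4$ elements) before its copy count runs out, so its final appeal to the cube-difference lemma involves an accounting step that your global counting simply avoids; pushing the paper's disjoint-copies scheme all the way to dimension $\log_2 k$ would cost in the exponent. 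What the paper's approach buys is extra structure (many disjoint congruent copies at every stage) and a very elementary argument; what yours buys is cleaner handling of degeneracy throughout and a tight derivation of the exponent from the Cauchy--Schwarz over shapes.
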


The proof of Theorem \ref{th:CubeLower} is in some sense Ramsey theoretic:  We show that a set with many repeated differences must contain a copy of the construction from Theorem \ref{thm:FLS19-g}. 
Such a subset contradicts the local property, and thus cannot exist.

Finally, we mention the best bound that we managed to obtain for the quadratic threshold.

\begin{claim} \label{cl:Quadratic}
When $k$ is a multiple of two, we have that 
\[ g\left(n, k, \frac{3k^2}{8} - \frac{3k}{4} + 2 \right) = \Omega(n^2). \]
\end{claim}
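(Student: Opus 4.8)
The plan is to isolate the single extremal $k$-point configuration responsible for the bound and then argue that a set with subquadratically many differences must contain it. Throughout write $m=k/2$, and for $t>0$ set $r(t)=|\{(a,b)\in A^2:a-b=t\}|$ and $Y_t=\{b\in A:b+t\in A\}$, so $|Y_t|=r(t)$.

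First I would record an elementary configuration lemma: if $X\subset\mathbb R$ has $|X|=m$ and $t>\max X-\min X$, then $Z=X\cup(X+t)$ has at most $3\binom m2+1$ positive differences. Indeed $t$ exceeds $|d|$ for every $d\in X-X$, so every positive difference of $Z$ lies in $\bigl((X-X)\cap(0,\infty)\bigr)\cup\bigl((X-X)+t\bigr)$, of size at most $\binom m2+(2\binom m2+1)=3\binom m2+1$; moreover $t>\operatorname{diam}(X)$ forces $X\cap(X+t)=\varnothing$, so $|Z|=2m=k$. Since $3\binom m2+1=\tfrac{3k^2}{8}-\tfrac{3k}{4}+1<\ell$, any $A$ containing such a $Z$ fails the local property (and taking $X$ a Sidon set with generic $t$ shows this configuration is extremal). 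So it is enough to prove: if $|A-A|<c_k n^2$ for a suitable constant $c_k$, then $A$ contains a \emph{dumbbell} $X\cup(X+t)$ with $|X|=m$ and $t>\operatorname{diam}(X)$; equivalently, there is a $t$ for which $Y_t$ has $m$ points inside an interval of length $<t$.

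To produce the dumbbell I would first count all translated $m$-blocks: by Cauchy–Schwarz applied to $\sum_t r(t)=\binom n2$ over the at most $|A-A|$ nonzero values of $t$, together with convexity of $x\mapsto\binom xm$,
\[ \sum_{t}\binom{r(t)}{m}\ \ge\ |A-A|\binom{\binom n2/|A-A|}{m}\ =\ \Omega\!\left(\tfrac{n^{2m}}{|A-A|^{m-1}}\right)\ =\ \omega(n^2) \]
once $|A-A|=o(n^2)$. The left side counts pairs $(X,t)$ with $X\subseteq Y_t$ and $|X|=m$, i.e.\ an $m$-element $X\subset A$ together with a translation keeping it in $A$; these split into the dumbbells we want ($t>\operatorname{diam}(X)$) and the \emph{bad} pairs ($X\subseteq Y_t$ with $\operatorname{diam}(X)\ge t$). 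So it would suffice to bound the number of bad pairs below the displayed quantity.

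The hard part is exactly this last estimate, and I expect it to be the crux of the whole proof. A bad pair carries a "non-crossing parallelogram" $b<b+t\le b'<b'+t$ inside $A$ (the diametral pair of $X$, translated by $t$), but the count of these is governed by the additive energy $\sum_t r(t)^2$, which is itself $\omega(n^2)$ and of the same order of magnitude as the main term — so the naive bound does not separate good from bad, the loss being essentially a factor proportional to $\operatorname{diam}(A)$ coming from the fact that different $t$'s live at different scales. I would resolve this by a scale-localization: either (i) decompose the differences dyadically by size and run the counting inside a band where $Y_t$ is forced to be locally dense, or (ii) argue contrapositively and structurally — if $A$ has no dumbbell then for every $t$ the set $Y_t$ has at most $m-1$ points in each window of length $<t$ (equivalently, by a one-dimensional Helly argument, all distance-$t$ pairs pairwise overlap as intervals), and one shows this uniform local sparsity at every scale is incompatible with $|A-A|=o(n^2)$. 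Completing (i) or (ii) — presumably using the dumbbell/higher-moment-energy machinery of Section~\ref{sec:Dumbbell} — is the one place where real work is needed; the rest is bookkeeping, and Claim~\ref{cl:Quadratic} follows.
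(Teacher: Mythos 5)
There is a genuine gap, and it is exactly where you say ``real work is needed'': you never produce the configuration you forbid. Your plan hinges on finding a \emph{separated} dumbbell, i.e.\ an $m$-set $X\subset A$ and a shift $t>\operatorname{diam}(X)$ with $X+t\subset A$, and the counting you set up (comparing $\sum_t\binom{r(t)}{m}$ with the ``bad'' pairs) is, as you note, not able to separate the two terms; the dyadic/Helly strategies are only sketched. As written, the argument does not prove the claim.

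The gap is also unnecessary, because the separation hypothesis $t>\operatorname{diam}(X)$ is a red herring. For \emph{any} disjoint union $Z=X\cup(X+t)$ with $|X|=m$, the cross differences between $X$ and $X+t$ are the numbers $\abs{t+e}$ with $e\in X-X$, hence at most $2\binom m2+1$ values, and the internal differences contribute at most $\binom m2$ more; so $|Z-Z|\le 3\binom m2+1=\frac{3k^2}{8}-\frac{3k}{4}+1$ with no condition relating $t$ to $\operatorname{diam}(X)$ (this is the paper's computation, phrased there as: $a_i-b_i=a_j-b_j$ forces $a_i-a_j=b_i-b_j$, saving $\binom{k/2}{2}$ differences on top of the $k/2-1$ saved by the repeated difference $t$). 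Once the separation condition is dropped, no energy or scale-localization argument is needed at all: if some difference $d$ has at least $k$ representations, then, since each element of $A$ occurs in at most two representations of $d$, you can extract $k/2$ vertex-disjoint pairs, and their $k$ elements form such a $Z$, contradicting the local property. Hence every $d\in A-A$ has fewer than $k$ representations and $|A-A|>\frac1k\binom n2=\Omega(n^2)$. In short, your configuration lemma is the right idea but is stated with a superfluous hypothesis whose verification you cannot complete; removing it collapses the whole proof to a one-line pigeonhole, which is the paper's route.
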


It is not difficult to prove Claim \ref{cl:Quadratic}, and it is possible that the quadratic threshold is much smaller. We include Claim \ref{cl:Quadratic} for completeness. That is, to document all the current best bounds for the arithmetic variant of the local properties problem. 

Lastly, we note that while most results in this paper apply to the setting of a general Abelian group, we decided to not pursue this direction and leave this to future work. 

\medskip

\noindent \textbf{Outline.} 
In Section \ref{sec:Dumbbell}, we define our new energy variant and use it to prove Theorem \ref{th:Lower43}. 
In Section \ref{sec:MoreLargeEll}, we prove our other results for large values of $\ell$: Theorem \ref{th:CubeLower} and Claim \ref{cl:Quadratic}.
In Section \ref{sec:SmallEll}, we prove our bounds for small values of $\ell$:
Theorem \ref{th:SuperLinear} and Theorem \ref{th:ProbConst}. In Section \ref{sec:Future}, we pose some questions for future work. 

\medskip \noindent
\textbf{Acknowledgements.} We would like to thank MIT Department of Science for the generous support. We would also like to express our gratitude towards Adam Sheffer for his mentorship and support throughout the NYC Discrete Math REU, as well as his invaluable help in the preparation of this paper. In addition, we would like to thank Sara Fish for many helpful discussions as well as Ilya Shkredov for his enlightening suggestion that led to an improvement in the bound of Theorem 1.5. Lastly, we thank the referees for their many useful comments and suggestions. 

\section{Dumbbell energy and Theorem \ref{th:Lower43}} \label{sec:Dumbbell}

In this section, we prove Theorem \ref{th:Lower43}. Our proof relies on a new energy variant. Before we introduce this energy, we briefly recall the notion of higher moment energies. 

The \emph{additive energy} of a finite set $A\subset \R$ is  
\[ E^+(A) = \abs{\{ (a_1, a_2, a_3, a_4) \in A^4\ :\ a_1 - a_2 = a_3 - a_4 \}}. \]
Additive energy is a common tool in additive combinatorics. 
For example, it provides the following lower bound for the size of the difference set:
\[ |A-A|\ge \frac{|A|^4}{E^+(A)}. \]

In the past decade, several variants of additive energy led to interesting new results. 
One of those variants is \emph{higher moment energies}. 
For an integer $\ell>1$, the $\ell$th moment energy of $A$ is   
\[ E_{\ell}^+(A) = \abs{\{ (a_1, \ldots, a_{2\ell}) \in A^{2\ell}\ :\ a_1 - a_2 = a_3 - a_4 = \cdots = a_{2\ell-1}-a_{2\ell} \}}. \]
We note that $E_2^+(A)$ is the standard additive energy $E^+(A)$. 
For applications of higher moment energies, see for example \cite{schoen15,SS13}. 
    \begin{figure}[ht]
            \centering
    \includegraphics[scale=0.6]{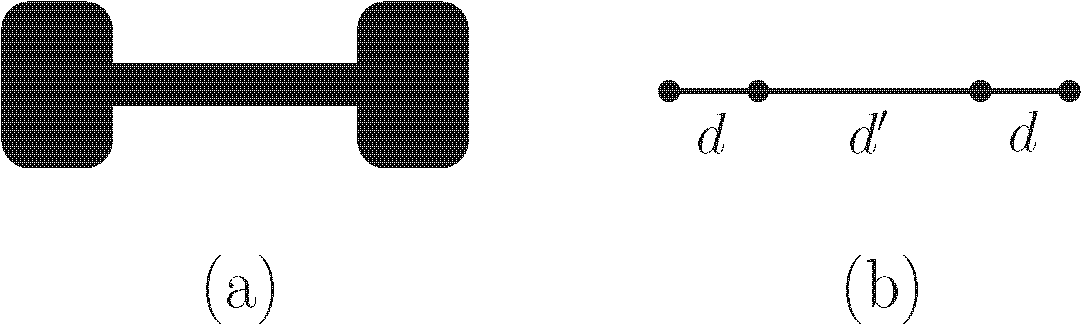} 
            \caption{(a) A dumbbell. (b) An arithmetic dumbbell. }
            \label{fi:dumbbells}
    \end{figure}

We now define a new variant of additive energy, which we call \emph{dumbbell energy}.
We say that a quadruple $(a_1,a_2,a_3,a_4)\in \R$ forms a \emph{dumbbell} if $a_1<a_2<a_3<a_4$ and $a_2-a_1=a_4-a_3$. 
See Figure \ref{fi:dumbbells}.
For distances $d,d'>0$, we define $D(d,d')$ as the congruence class of all dumbbells $(a_1,a_2,a_3,a_4)$ that satisfy $a_2-a_1=a_4-a_3=d$ and $a_3-a_2=d'$.
That is, two dumbbells are congruent if there exists a translation of $\R$ that takes one dumbbell to the other.

We can think of the $\ell$th additive energy as the number of $\ell$-tuples of congruent intervals.
Similarly, we consider an energy variant that counts $\ell$-tuples of congruent dumbbells. 
We define the $\ell$th \emph{dumbbell energy} of $A\subset \R$ as
\begin{align*} 
E^D_{\ell}(A) = \Big|\Big\{(a_1, \ldots, a_{4 \ell}) \in A^{4\ell}\ :\ &\text{ For } 1\le i \le \ell, \text{ the quadruples } (a_{4i-3},a_{4i-2},a_{4i-1},a_{4i}) \\[2mm] 
& \text{ form } \ell \text{ distinct dumbbells from the same congruence class }\Big\} \Big|.
\end{align*}

For $d,d' \in A-A$, we set 
\[ r^D_A(d,d') = |\{(a_1,a_2,a_3,a_4)\in A^4\ :\ a_2-a_1 = a_4-b_3= d \text{ and } a_3-a_2=d' \}|. \]
In other words, $r^D_A(d,d')$ is the number of dumbbells in the congruence class $D(d,d')$ that are spanned by $A$.
We note that a dumbbell $(a_1,a_2,a_3,a_4)$ of $D(d,d')$ is uniquely determined by $a_1$.
Thus, $r^D_A(d,d')\le |A|$ for all $d,d'\in A-A$.
We also note that
\[ E^D_{\ell}(A) = \sum_{d,d'\in A-A}\binom{r^D_A(d,d')}{\ell}. \]

For a difference $d \in A-A$, we set 
\[ r^{-}_A(d) = |\{(a,a')\in A^2\ :\ a-a' = d \}|. \]
In other words, $r^{-}_A(d)$ is the number of representations that the difference $d$ has in $A$.

The following lemma states a connection between the $\ell$th dumbbell energy $E^D_{\ell}(A)$ and the size of the difference set $|A-A|$.

\begin{lemma}\label{le:DumbbellLower}
The following holds for every even constant $\ell\ge 2$. 
Every set $A$ of $n$ reals satisfies that $|A-A| = \Omega\left(n^{4/3} \right)$ or that
\[ E^D_{\ell}(A) = \Omega\left( \frac{n^{4\ell}}{|A-A|^{3\ell - 2}} \right).\]
\end{lemma}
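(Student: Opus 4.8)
The plan is to establish the dumbbell-energy lower bound by a two-variable application of the Cauchy--Schwarz inequality, mirroring the classical derivation of $|A-A| \ge |A|^4/E^+(A)$ but now tracking both the ``bar'' distance $d$ and the ``handle'' distance $d'$ of a dumbbell. First I would set up the counting identity. A dumbbell $(a_1,a_2,a_3,a_4)$ spanned by $A$ is determined by choosing $a_1 \in A$, then $a_2 \in A$ with $a_2 > a_1$ (this fixes $d = a_2-a_1$), then $a_3 \in A$ with $a_3 > a_2$ (this fixes $d' = a_3 - a_2$), and finally requiring $a_4 = a_3 + d \in A$. So the total number $N$ of dumbbells spanned by $A$ equals $\sum_{d,d' \in A-A} r^D_A(d,d')$. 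The key observation is that $N$ is large: as long as $A$ has few ``large gaps,'' most ordered triples $a_1 < a_2 < a_3$ in $A$ extend to a dumbbell, so $N = \Omega(n^3)$ unless $A-A$ is already large. More precisely I would argue that if $N$ is small then many triples $(a_1,a_2,a_3)$ fail to extend, which forces many distinct values $a_3 + (a_2 - a_1)$ to lie outside $A$ but inside an interval of controlled length, and a short pigeonhole/covering argument then gives $|A-A| = \Omega(n^{4/3})$; this dichotomy is exactly the ``$|A-A| = \Omega(n^{4/3})$ or $\dots$'' in the statement.

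Granting $N = \Omega(n^3)$, the second step is a double Cauchy--Schwarz on $E^D_\ell(A) = \sum_{d,d'} \binom{r^D_A(d,d')}{\ell}$. Since $r^D_A(d,d') \le |A| = n$, we have $\binom{r^D_A(d,d')}{\ell} = \Omega\big( r^D_A(d,d')^\ell / n^{\cdots}\big)$ only when $r^D_A(d,d')$ is comparable to its own size, so the cleaner route is: by convexity (Jensen / power-mean) applied to the sum over the support of $r^D_A$,
\[
E^D_\ell(A) \ \gtrsim\ \frac{\big(\sum_{d,d'} r^D_A(d,d')\big)^\ell}{(\#\{(d,d') : r^D_A(d,d')>0\})^{\ell-1}} \cdot \frac{1}{\text{poly}(\ell)} \ =\ \frac{N^\ell}{M^{\ell-1}} \cdot \frac{1}{\text{poly}(\ell)},
\]
where $M$ bounds the number of populated congruence classes $D(d,d')$. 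The decisive point is the bound $M \le |A-A|^2$ — trivially, since $d, d' \in A-A$ — but in fact we need something sharper to land on the exponent $3\ell - 2$. Here I would use that $r^D_A(d,d') \ge 1$ forces $d+d' = a_3 - a_1 \in A-A$ as well, so a populated class is determined by $d \in A-A$ together with the sum $d+d' \in A-A$, giving again $M \le |A-A|^2$; combined with $N = \Omega(n^3)$ this yields $E^D_\ell(A) = \Omega\big(n^{3\ell}/|A-A|^{2\ell-2}\big)$, which is \emph{not yet} the claimed $n^{4\ell}/|A-A|^{3\ell-2}$.

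So the real engine must be a more careful, weighted Cauchy--Schwarz that also exploits $r^-_A$: I would instead count $N$ as $\sum_{d \in A-A} \big(\text{number of } (a_2,a_3,a_4) \text{ with } a_2-a_1 = a_4-a_3 = d\big)$, i.e.\ for a fixed bar-distance $d$ the dumbbells split as roughly $r^-_A(d)$ choices of the left bar times $r^-_A(d)$ choices of the right bar times the count of admissible handles; summing and applying Cauchy--Schwarz in $d$ against $\sum_d r^-_A(d) = n^2$ converts powers of $n$ into powers of $|A-A|$ in the denominator with the right bookkeeping. Concretely, writing $r^D_A(d,d')$ in terms of $r^-_A(d)$ and then bounding $\sum_d r^-_A(d)^{\text{const}} \le n \cdot \big(\sum_d r^-_A(d)\big) = n^3$ only when each $r^-_A(d) \le n$, and $\sum_d r^-_A(d) = n^2$ with support size $\le |A-A|$, gives the conversion factor $n^3/|A-A|$ per ``extra'' Cauchy--Schwarz. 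Iterating this bookkeeping $\ell$ times, starting from $N \gtrsim n^3$, and being careful that the parity condition ``$\ell$ even'' is what makes the distinctness requirement (the $\binom{\cdot}{\ell}$ versus $(\cdot)^\ell$ gap) harmless, should produce exactly $n^{4\ell}/|A-A|^{3\ell-2}$.

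The main obstacle I anticipate is pinning down this last conversion so that the exponents come out to precisely $4\ell$ over $3\ell-2$ rather than the too-weak $3\ell$ over $2\ell-2$ — i.e.\ correctly identifying which sum to Cauchy--Schwarz against ($\sum r^-_A(d) = n^2$ versus $\sum_{d,d'} r^D_A(d,d') = N$) and how many times, and handling the degenerate dumbbells (where some of the $a_i$ coincide, or $d$ or $d'$ is ``small'') and the ``$\Omega(n^{4/3})$ escape clause'' cleanly rather than by ad hoc case analysis. The secondary technical nuisance is the passage from $\binom{r^D_A(d,d')}{\ell}$ to $r^D_A(d,d')^\ell$, which loses a constant depending on $\ell$ and requires $r^D_A(d,d') \ge \ell$ on the dominant part of the sum — standard, but it interacts with the evenness hypothesis and must be stated carefully.
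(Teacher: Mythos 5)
Your proposal has a genuine gap, and it sits exactly at the step you treat as the starting point. The claim that the total dumbbell count $N=\sum_{d,d'}r^D_A(d,d')$ satisfies $N=\Omega(n^3)$ unless $|A-A|=\Omega(n^{4/3})$ is false, and the ``few large gaps / pigeonhole-covering'' sketch offered for it is not an argument. For a counterexample, let $A$ be the union of $n^{0.3}$ generic translates of an arithmetic progression of length $n^{0.7}$: then $|A-A|=\Theta(n^{1.3})=o(n^{4/3})$, while each difference has roughly $n^{0.7}$ representations, so $N\approx n^{1.3}\cdot n^{1.4}=n^{2.7}=o(n^3)$. Generic triples $a_1<a_2<a_3$ simply do not extend to dumbbells unless $A$ is very structured, so no dichotomy of the form ``$N\gtrsim n^3$ or $|A-A|\gtrsim n^{4/3}$'' can hold. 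The correct statement, and the one the paper proves, is $N=\Omega\bigl(n^4/|A-A|\bigr)$ unless $|A-A|=\Theta(n^2)$; it follows from the identity $N=\sum_{d}\binom{r^-_A(d)}{2}$ together with one Cauchy--Schwarz over the at most $|A-A|$ differences, using $\sum_d r^-_A(d)=\binom{n}{2}$ (not $n^2$, under this paper's convention).

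Your later diagnosis is also off, which is why you end up proposing an unspecified ``iterate Cauchy--Schwarz $\ell$ times'' engine that is never carried out. The Jensen/H\"older step you already wrote down, with the number of populated classes bounded by $|A-A|^2$, is all that is needed: plugging in the correct bound $N=\Omega(n^4/|A-A|)$ gives
\[
E^D_\ell(A)\ \gtrsim\ \frac{N^\ell}{\bigl(|A-A|^2\bigr)^{\ell-1}}\ =\ \Omega\!\left(\frac{n^{4\ell}}{|A-A|^{3\ell-2}}\right),
\]
which is exactly the claimed bound, with no iteration. (The paper routes this through the intermediate estimate $\sum_{d,d'}r^D_A(d,d')^2=\Omega(n^8/|A-A|^4)$ and then applies H\"older, but the mechanism is the same: one Cauchy--Schwarz in $d$ to lower-bound $N$, one convexity step over a support of size at most $|A-A|^2$.) Note also that your ``too weak'' intermediate bound $n^{3\ell}/|A-A|^{2\ell-2}$ is in fact \emph{stronger} than the target, since $|A-A|\ge n-1$; so had your $N=\Omega(n^3)$ claim been true, your first Jensen step would already have finished the proof --- a sign that the difficulty was misidentified. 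As written, the proposal neither establishes the key lower bound on $N$ nor completes the exponent bookkeeping, so it does not constitute a proof.
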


\begin{proof}

As a first step, we will prove that
\[\sum_{d,d' \in A - A} r^D_A(d,d')^2 = \Omega \left( \frac{n^8}{|A-A|^4} \right).\]


Let 
\[ \mathbf{1}_B(x) = \begin{cases} 1 &\text{if } x \in B, \\ 0 &\text{otherwise}, \end{cases}\]
be the indicator function for a set $B \subset \mathbb{R}$. We may write
\[ S := \sum_{d, d' \in A - A} r_A^D(d,d') = \sum_{a \in A} \sum_{d, d'\in A-A} \mathbf{1}_{A \cap (A-d)}(a) \mathbf{1}_{A \cap (A-d)}(a+ d') = \sum_{a\in A} \sum_{d \in A-A} \mathbf{1}_{A \cap (A-d)}(a) r_A^{-}(d). \]
Note that $\sum_{d \in A-A} r_A^{-}(d) = \sum_{a \in A} \sum_{d \in A-A} \mathbf{1}_{A \cap (A-d)}(a) = \binom{|A|}{2} = \binom{n}{2}$.

Consider $\mathcal{D} = \left \{d: |r_A^{-}(d)| \geq S/4 \cdot \binom{n}{2}^{-1} \right \}$. It is not difficult to see that $\mathcal{D}$ captures a large fraction of the above sum:
since \[ \sum_{a \in A} \sum_{d \not \in \mathcal{D}} \mathbf{1}_{A \cap (A-d)}(a) r_A^{-}(d) < \max_{d \not \in \mathcal{D}} r_A^{-}(d) \cdot \binom{n}{2} =  \frac{S}{4}, \]
it follows that
\[ \sum_{a \in A} \sum_{d \in \mathcal{D}} \mathbf{1}_{A \cap (A-d)}(a) r_A^{-}(d) \geq \frac{3S}{4}. \]
We also have the following bound 
\[ |\mathcal{D}| \cdot S/4 \cdot \binom{n}{2}^{-1} \leq \sum_{d \in A-A} r_A^{-}(d)  = \binom{n}{2}.\]
That is, $ |\mathcal{D}| \leq 4/S \cdot \binom{n}{2}^2$.

Let $\mathcal{B} \subset (A-A) \times (A-A) \subset \mathbb{R}^2$ be the set which projects onto a copy of $\mathcal{D}$ on each coordinate axes. By the symmetry in $d, d'$, the above inequality implies that 
\[ \sum_{d,d' \in \mathcal{B}} r_A^{D}(d,d') \geq \frac{3S}{4}. \]
Furthermore, the Loomis-Whitney inequality gives the upper bound $|\mathcal{B}| \leq |\mathcal{D}|^2 \leq 16/S^2 \cdot \binom{n}{2}^4$. Consequently, the Cauchy-Schwarz inequality implies that

\[ \frac{9S^2}{16} \leq \left( \sum_{d, d' \in \mathcal{B}} r_A^{D}(d,d') \right)^2 \leq |\mathcal{B}| \sum_{d,d' \in \mathcal{B}}r_A^D(d,d')^2 \leq 16/S^2 \cdot \binom{n}{2}^4 \sum_{d,d' \in A}r_A^D(d,d')^2.  \]
Equivalently, 
\begin{equation}\label{eq:LW}
    \sum_{d,d' \in A - A} r^D_A(d,d')^2 \geq \frac{9S^4}{256 \cdot \binom{n}{2}^4}.
\end{equation}

By definition, we have that
\begin{align*} 
S = \sum_{d,d'\in A-A} r^{D}_A(d,d') = \sum_{d\in A-A} \binom{r^D_A(d)}{2} &= \sum_{d\in A-A} \frac{r^D_A(d)^2}{2} -\sum_{d\in A-A}\frac{r^D_A(d)}{2} \\[2mm]
&= \sum_{d\in A-A} \frac{r^D_A(d)^2}{2} -\frac{1}{2} \binom{n}{2}.
\end{align*}

Combining the above with the Cauchy-Schwarz inequality leads to
\[ \sum_{d,d'\in A-A} r^{D}_A(d,d') \ge  \frac{(\sum_{d\in A-A}r^D_A(d))^2}{2|A-A|}-\frac{1}{2} \binom{n}{2} = \binom{n}{2}^2/2|A-A| - \frac{1}{2} \binom{n}{2}.\]
If $|A-A|=\Theta(n^2)$ then we are done. We may thus assume that 
\begin{equation} \label{eq:sumDumbbells}
S =\Omega\left(\frac{n^4}{|A-A|}\right).\end{equation}

Combining this Equation~\ref{eq:LW}, we get the desired lower bound 
\[\sum_{d,d' \in A - A} r^D_A(d,d')^2 = \Omega \left( \frac{n^8}{|A-A|^4} \right).\]


Lastly, by H\"older's inequality and since $\ell$ is constant, we obtain that
\begin{align*}
E_\ell^D(A) &= \sum_{d,d'\in A-A} \binom{r_A^D(d,d')}{\ell} \ge  \sum_{d,d'\in A-A} \left(\frac{r_A^D(d,d')}{\ell}\right)^{\ell}
\ge \frac{(\sum_{d,d' \in A-A} r_A^D(d,d')^2)^{\ell/2}}{|A-A|^{2(\ell/2-1)}\ell^{\ell}} \\[2mm]&= \Omega \left( \frac{(n^8/|A-A|^4)^{\ell/2}}{|A-A|^{\ell-2}\ell^{\ell}} \right) = \Omega\left(\frac{n^{4\ell}}{|A-A|^{3\ell-2}}\right).
\end{align*}
\end{proof}

We next introduce a variant of the energy graph that is defined in \cite{FPS20}.
For a finite $A\subset \R$ and $\ell$ even, we define the $\ell$th \emph{dumbell energy graph} $G$ of $A$, as follows. The graph $G$ has a vertex for every $2\ell$-tuple from  $A^{2\ell}$.
There is an edge between the vertices $(a_1,\ldots,a_{2\ell})$ and $(b_1,\ldots,b_{2\ell})$ if
the tuple $(a_1,\ldots,a_{2\ell},b_1,\ldots,b_{2\ell})$ contributes to $E^D_{\ell}$.
Equivalently, if each vertex consists of $\ell/2$ dumbbells and all $\ell$ dumbbells are congruent. 
By definition, the number of edges in $G$ is $E_{\ell}^D(A)$.

We are now ready to prove Theorem \ref{th:Lower43}.
We first recall the statement of this theorem.
\vspace{2mm}

\noindent {\bf Theorem \ref{th:Lower43}.} \emph{Every $k\ge 8$ that is a multiple of 8 satisfies}  
\[ g\left(n, k, \frac{9k^2}{32} - \frac{9k}{8} + 5 \right) = \Omega(n^{4/3}). \]

\begin{proof}
Let $A$ be a set of $n$ reals, such that every subset $A'\subset A$ of size $k$ satisfies that $|A'-A'|\ge \frac{5k^2}{16} - k + 5$.
Let $G$ be the $k/4$ dumbbell energy graph of $A$.
Consider three vertices $v_1,v_2,v_3$ of $G$, such that there is an edge between $v_1$ and $v_2$ and another edge between $v_2$ and $v_3$.
Then each of the three vertices consists of $k/8$ dumbbells, and all $3k/8$ dumbbells are congruent. 
Thus, there is also an edge between $v_1$ and $v_3$.
We conclude that every connected component of $G$ is a clique. 

We may discard connected components with a single vertex without changing the number of edges.
Every remaining connected component corresponds to a distinct dumbbell congruency class.
Since there are $O(|A-A|^2)$ distinct (non-congruent) dumbbells, the number of remaining connected components in $G$ is $O(|A-A|^2)$.

Assume for contradiction that $A$ spans $k/4$ disjoint congruent dumbbells. 
Let $A'\subset A$ be the set of $k$ numbers that form these $k/4$ dumbbells. 
At most four distinct differences are spanned by pairs of numbers from the same dumbbell. 
In Figure \ref{fi:dumbbells}(b), these distances are $d,d',d+d'$, and $2d+d'$.

    \begin{figure}[ht]
            \centering
    \includegraphics[scale=0.6]{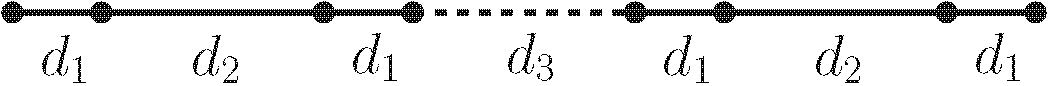} 
            \caption{The nine distances between the two dumbbells are $d_3,d_1+d_3,2d_1+d_3,d_1+d_2+d_3,2d_1+d_2+d_3,3d_1+d_2+d_3,2d_1+2d_2+d_3,3d_1+2d_2+d_3,4d_1+2d_2+d_3$. }
            \label{fi:TwoDumbbells}
    \end{figure}

There are $\binom{k/4}{2}$ pairs of distinct dumbbells as described above. Each such pair spans at most 9 distinct differences with one number from each dumbbell. 
See Figure \ref{fi:TwoDumbbells}.
We conclude that 
\[ |A'-A'| \le 4+\binom{k/4}{2}\cdot 9 = \frac{9k^2}{32}-\frac{9k}{8}+4. \]
This contradicts the assumption, so $A$ does not span $k/4$ disjoint congruent dumbbells. 

Let $S\subset A$ be a dumbbell. 
We wish to derive an upper bound for the number of dumbbells $S'$ that are congruent to $S$ but are not disjoint to it. 
There are at most four choices for the element of $S$ that also appears in $S'$. Then, there are at most three possible positions for this element in $S'$. 
Since $S'$ is congruent to $S$, the other three coordinates of $S'$ are uniquely determined.
Thus, at most 12 dumbbells are congruent to $S$ and not disjoint to it. 

By combining the two preceding paragraphs, we conclude that $A$ cannot span $3k$ congruent dumbbells. 
For a fixed dumbbell congruence class, every vertex in the corresponding connected component of $G$ consists of $k/8$ copies of that dumbbell, possibly with repetitions.
Since there are fewer than $3k$ such dumbbells to choose from, this connected component  consists of fewer than $(3k)^{k/8}$ vertices. 
The number of edges in the connected component is smaller than $(3k)^{k/4}$.
This holds for every connected component of $G$.
Recalling that the number of remaining connected components in $G$ is at most $|A-A|^2$, we conclude that the number of edges in $G$ is $O_k(|A-A|^2)$.

Since the number of edge in $G$ is $E_{k/4}^D(A)$, we have that $E_{k/4}^D(A) = O_k(|A-A|^{2})$.
By Lemma \ref{le:DumbbellLower}, we have that
\[ E^D_{k/4}(A) = \Omega\left( \frac{n^k}{|A-A|^{3k/4 - 2}} \right). \]
Combining the two above bounds for $E^D_{k/4}(A)$ implies that $|A-A|=\Omega(n^{4/3})$.
\end{proof}

\begin{remark}
When performing the above analysis with $E_2^D(A)$ instead of $E^D_{k/4}(A)$, we obtain the weaker bound $|A-A|=\Omega(n^{8/7})$ with the same local condition.
\end{remark}

\begin{remark}\label{rmk:higher-dumbbell}
We say that an 8-tuple $(a_1,a_2,a_3,a_4,a_5,a_6,a_7,a_8)\in \R^8$ forms a \emph{double dumbbell} if $a_1<a_2<a_3<a_4<a_5<a_6<a_7<a_8$ and $a_2-a_1=a_4-a_3 = a_6 - a_5 = a_8 - a_7$ and $a_3 - a_2 = a_7 - a_6$. A natural generalization of the dumbbell energy studied above is the $\ell$th \emph{double dumbbell energy}, defined for $A\subset \R$ as
\begin{align*} 
\mathcal{E}^D_{\ell}(A) = \Big|\Big\{(a_1, \ldots, a_{8 \ell}) \in A^{8\ell}\ :\ &\text{ For } 1\le i \le \ell, \text{ the quadruples } (a_{8i-7}, a_{8i-6}, \ldots, ,a_{8i}) \\[2mm] 
& \text{ form } \ell \text{ distinct double dumbbells from the same congruence class }\Big\} \Big|.
\end{align*}
As before, two double dumbbells are congruent if there exists a translation of $\R$ that takes one double dumbbell to the other.

An identical argument as that for Theorem~\ref{th:Lower43} gives the bound 
\[ g\left(n, k, \frac{31k^2}{128} - \frac{31k}{16} + 9 \right) = \Omega(n^{8/7})\]
for every $k$ a multiple of 16. Here we have slightly weaker local condition than that in Theorem~\ref{th:Lower43} at the expense of a worse exponent in $\Omega\left( n^{8/7} \right)$. We chose to present Theorem~\ref{th:Lower43} because it is less calculation intensive and better illustrates the main ideas of the proof.
\end{remark}

\section{Additional proofs for large values of $\ell$} \label{sec:MoreLargeEll}

In this section, we prove our bounds for large values of $\ell$ that are not related to dumbbell energy. That is, we prove Theorem Theorem \ref{th:CubeLower} and Claim \ref{cl:Quadratic}.

We begin with the proof of Claim 
\ref{cl:Quadratic}, since this simple proof is a nice warm-up. The proof relies on the observation that $a - b = c-d$ implies $a - c = b -d$. 

Recall that for a set $A$ and a difference $d \in A - A$ we set 
\[ r^{-}_A(d) = |\{(a,a')\in A^2\ :\ a-a' = d \}|. \]
\vspace{2mm}

\noindent { \bf Claim \ref{cl:Quadratic}.}
\emph{When $k$ is a multiple of two, we have that} 
\[ g\left(n, k, \frac{3k^2}{8} - \frac{3k}{4} + 2 \right) = \Omega(n^2). \]
\begin{proof}
Let $A$ be a set of $n$ reals, such that every subset $A'\subset A$ of size $k$ satisfies that $|A'-A'|\ge \frac{3k^2}{8} - \frac{3k}{4} + 2$. 
Assume for contradiction that there exists $d\in A-A$ for which $r_A^-(d)\ge k$.
Since every number participates in at most two representations of $d$, there exist distinct  $a_1,\ldots,a_{k/2},b_1,\ldots,b_{k/2}$ such that $a_1-b_1=\cdots = a_{k/2}-b_{k/2}=d$.
We set $S=\{a_1,\ldots,a_{k/2},b_1,\ldots,b_{k/2}\}$.

We may assume, without loss of generality, that $a_1>a_2>\cdots>a_{k/2}$.
This implies that $b_1>b_2>\cdots>b_{k/2}$.
For every $1\le i < j \le k/2$, we have that $a_i-b_i=a_j-b_j$, which in turn implies that $a_i-a_j=b_i-b_j$.
Since there are $\binom{k/2}{2}$ such choices of $i$ and $j$, there are $\binom{k/2}{2}$ equations of the form $a_i-a_j=b_i-b_j$.
We conclude that 
\[ |S-S|\le \binom{k}{2} - \left(\frac{k}{2}-1\right) - \binom{k/2}{2} = \frac{3k^2}{8}-\frac{3k}{4}+1.\]

Since the above contradicts the assumption about $A$, we get that every $d\in A-A$ satisfies $r_A^-(d)< k$.
Since $k$ is constant, we have that
\[ |A-A| > \frac{1}{k} \binom{n}{2} = \Theta(n^2). \]
\end{proof}

We now move to prove Theorem \ref{th:CubeLower}. 
We rely on the following configuration from \cite{FLS19}. Fish, Lund and Sheffer utilize the configuration to obtain an upper bound on $g(n,k,k^{2/3})$. We will instead use this construction to prove a lower bound on $g(n,k,\ell)$ where $\ell = \Theta(k^{2/3})$.   

For an integer $i\ge 1$ and positive reals $a,\delta_1, \ldots, \delta_i$, we define a \emph{projected-$i$-cube} as 
\[ P(a,\delta_1, \ldots, \delta_i) = \{a+x_1\delta_1 + x_2\delta_2 +\cdots + x_i\delta_i\ :\ x_1,\ldots,x_i \in \{0,1\}\}. \]
We note that a projected-$i$-cube is a set of at most $2^i$ real numbers.
A projected-$i$-cube can be thought of as a projection of an $i$-dimensional hypercube onto $\R$, which explains the name of this object.

For $i>1$, every projected-$i$-cube is the union of two projected-$(i-1)$-cubes that are identical up to a translation of distance $p_i$.
The following lemma states another useful property of projected-$i$-cubes.

\begin{lemma}\label{le:cubeDiffs}
If $A$ is a projected-$i$-cube then  
\[ \abs{A-A} \leq \frac{3^{i} - 1}{2}. \]
\end{lemma}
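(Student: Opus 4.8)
The plan is to induct on $i$. For the base case $i=1$, a projected-$1$-cube is $P(a,\delta_1) = \{a, a+\delta_1\}$, which has a single positive difference $\delta_1$, and indeed $(3^1-1)/2 = 1$. For the inductive step, write $A = P(a,\delta_1,\ldots,\delta_i)$ and split it, as noted just before the lemma, as $A = B \cup (B + \delta_i)$ where $B = P(a,\delta_1,\ldots,\delta_{i-1})$ is a projected-$(i-1)$-cube. Then every positive difference of $A$ is of the form $x - y$ with $x,y \in A$, and depending on which half each of $x,y$ lies in, the difference lies in one of: $B - B$ (both in the lower half), $(B+\delta_i)-(B+\delta_i) = B - B$ (both in the upper half), $(B+\delta_i) - B = (B-B) + \delta_i$ (one from each half). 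Hence
\[ A - A \subseteq (B-B) \;\cup\; \big((B-B)+\delta_i\big) \;\cup\; \{\delta_i\}, \]
where I have also thrown in the single extra difference $\delta_i$ itself (this occurs when $x = y + \delta_i$ with the same $B$-coordinates). Actually it is cleaner to note that $(B-B)\cup\{0\}$ is precisely the set of \emph{all} differences (including zero) among elements of $B$; writing $D = (B-B) \cup \{0\}$, one gets $A - A \subseteq \big(D \cup (D+\delta_i)\big) \setminus \{0\}$, so $|A-A| \le 2|D| - 1 = 2|B-B| + 1$.

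Now apply the inductive hypothesis $|B-B| \le (3^{i-1}-1)/2$ to obtain
\[ |A-A| \;\le\; 2\cdot\frac{3^{i-1}-1}{2} + 1 \;=\; 3^{i-1} - 1 + 1 \;=\; 3^{i-1}. \]
Hmm — this gives $3^{i-1}$, which is smaller than the claimed $(3^i-1)/2$ for $i \ge 2$, so the bound would be even stronger than stated; I should double-check the overlap accounting, since the intended bound $(3^i-1)/2$ suggests the recursion the authors have in mind is $|A-A| \le 3|B-B| + 1$ rather than $2|B-B|+1$. The discrepancy comes from whether the three "blocks" of differences $B-B$, $(B-B)+\delta_i$, and $(B-B)-\delta_i$ are really all needed: differences of the form $y - (x+\delta_i)$ with $y,x\in B$ can be negative, but their absolute values are positive differences too, and these give $(B-B) - \delta_i$ shifted to positive representatives, which need not coincide with the other two blocks. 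So the honest count is $A-A \subseteq \big((B-B)\cup\{0\}\big) \cup \big((B-B)+\delta_i\big) \cup \big(\delta_i - (B-B \cup \{0\})\big)$, and after removing $0$ this has size at most $3|B-B| + 2$, or with a slightly more careful treatment of the shared endpoint $\delta_i$, exactly $3|B-B|+1$; then induction yields $|A-A| \le 3\cdot\frac{3^{i-1}-1}{2} + 1 = \frac{3^i - 1}{2}$, as claimed.

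The main obstacle, then, is not the induction skeleton but pinning down the exact constant in the one-step recursion: one must correctly identify that the positive differences of $A = B \cup (B+\delta_i)$ fall into three translated copies of the difference-structure of $B$ — the "within a half," the "upper minus lower," and the "lower minus upper (taken positively)" families — and then account for exactly which boundary element ($0$ versus $\delta_i$) is double-counted, so that the recursion is $|A-A| \le 3|B-B| + 1$ and not something weaker. I would verify this carefully on the $i=2$ case ($P(a,\delta_1,\delta_2)$, four points, at most $(3^2-1)/2 = 4$ positive differences) to calibrate the constant before writing the general step.
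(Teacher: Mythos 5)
Your final argument is correct and is essentially the paper's proof: induct on $i$, split the projected-$i$-cube into two translated copies of a projected-$(i-1)$-cube sharing the same difference set $D$, and bound the new positive differences by the shifted families $D\pm\delta_i$ (for each $d\in D$ only one of $\delta_i-d$, $d-\delta_i$ can be a positive difference) together with $\delta_i$ itself, giving the recursion $|A-A|\le 3|D|+1$ and hence the bound $(3^i-1)/2$. Only polish needed: state the containment with positive representatives of both $\delta_i-d$ and $d-\delta_i$ rather than only $\delta_i-(B-B)$, and drop the first-pass count $2|D|+1$, which, as your own $i=2$ check shows, is already false for four generic points.
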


For comparison, a set $A$ of $2^{i}$ random reals is expected to satisfy $\abs{A-A} =\Theta(4^{i})$.

\begin{proof}
The proof is by induction on $i$. 
For the induction basis, we consider the case of $i=1$.
In this case $A$ is a set of two numbers, so $|A-A| = 1 = (3^1-1)/2$.
    
For the induction step, we assume that the claim holds for projected-$i$-cubes and consider a projected-$(i+1)$-cube $A=P(a,\delta_1, \ldots, \delta_{i+1})$.
Then $A$ is the union of the two projected-$i$-cubes $A_1 = P(a,\delta_1, \ldots, \delta_{i})$ and $A_2 = P(a+\delta_{i+1},\delta_1, \ldots, \delta_{i})$.
These two projected-$i$-cubes have the same difference set $D=A_1-A_1=A_2-A_2$.
The induction hypothesis implies that $|D|\le (3^i-1)/2$.

Next, we note that every difference in $(A-A)\setminus\{\delta_{i+1}\}$ is either formed by two numbers from $A_1$ or by one number from $A_1$ and another from $A_2$. 
By the preceding paragraph, there are at most $(3^i-1)/2$ differences of the former type. 
Every difference of the latter type is in $D\pm \delta_{i+1}$.
The number of such differences is at most $2\cdot |D| = 3^i-1$.
In total, 
\[ |A-A|\le (3^i-1)/2 + (3^i-1)+1 = (3^{i+1}-1)/2. \]
This completes the proof of the induction step and thus the proof of the theorem.
\end{proof}

We define the \emph{left endpoint} of a projected-$i$-cube as the smallest number of that cube. The \emph{right endpoint} is the largest number of the cube.
\vspace{2mm}

\noindent {\bf Theorem  \ref{th:CubeLower}.}
\emph{For every $k \geq 8$ that is a power of two and sufficiently large $n$, we have that}
\[ g\left(n,k, \frac{k^{\log_2(3)}+1}{2} \right) = \Omega \left( n^{1+\frac{2}{k-2} }\right).\]

\begin{proof}
We consider a set $A\subset \R$ such that $|A|=n$ and $|A-A|\le n^{1+\frac{2}{k-2}}/9$. 
We show that such a set must contain a projected-$(\log_2(k)-2)$-cube.
Then, Lemma \ref{le:cubeDiffs} implies that $A$ does not satisfy the local property in the statement of the theorem, which completes the proof.

For an integer $i$, we set $\eta_i = 1 - (2^{i+1}-2)/(k-2)$.
We prove by induction on $1\le i \le \log_2 (k) -2$ that $A$ contains at least $n^{\eta_i}$ disjoint translated copies of a projected-$i$-cube with exactly $2^i$ elements.
Combining this with the preceding paragraph completes the proof of the theorem. 

There are $\binom{n}{2}> n^2/3$ pairs of elements from $A$ and $|A-A|\le n^{1+\frac{2}{k-2}}/9$.
By the pigeonhole principle there exists a distance that occurs at least  $3n^{(k-4)/(k-2)}$ times.
We arbitrarily fix one such distance and denote it as $d_1$. 
We set $A_1$ to be the set of elements of $A$ that are at distance $d_1$ from another element of $A$. 
Let $G_1$ be the graph whose vertices are the elements of $A_1$.
Two vertices are connected by an edge if they span the difference $d_1$.
We note that $G_1$ contains at least $3n^{(k-4)/(k-2)}$ edges.
Since every vertex is of degree at most two, we can obtain a matching by repeatedly choosing an arbitrary edge $e$ and discarding at most two other edges that share a vertex with $e$. This implies that $G_1$ contains a matching of size at least $n^{(k-4)/(k-2)}$.
Since a projected-1-cube consists of two points and $\eta_1=(k-4)/(k-2)$, we obtain the case of $i=1$.
This completes the induction basis.

For the induction step, we fix $1\le i < \log_2 (k) -2$ and assume that the claim holds for $i$.
That is, $A$ contains at least $n^{\eta_i}$ disjoint translated copies of a projected-$i$-cube.
We note that, in this range, $\eta_i>0$.
We fix exactly $n^{\eta_i}$ translated copies and set $A_{i+1}$ to be the set of endpoints of these. By definition, we have that $|A_{i+1}|= 2n^{\eta_i}$.
The number of pairs of endpoints that do not belong to the same translated copy is 
\[ \frac{1}{2}\cdot 2 n^{\eta_i}\cdot \left(2 n^{\eta_i}-2\right) = 2n^{2\eta_i}- 2n^{\eta_i}. \]

Each difference is spanned by $A_{i+1}$ fewer than $2n^{\eta_i}$ times. 
Thus, the number of above pairs that span a difference that is also spanned by the projected-$i$-cube is $O(n^{\eta_i})$.
When $n$ is sufficiently large, we may assume that the number of pairs that span a distance that is not in the projected-$i$-cube is at least $n^{2\eta_i}$. Since $A_{i+1}\subset A$ and $|A-A|\le n^{1+\frac{2}{k-2}}/9$, there exists a difference that is spanned by at least 
\[ 9n^{2\eta_i-1-2/(k-2)} = 9n^{\eta_{i+1}} \] 
of these pairs. 
We fix such a difference and denote it as $d_{i+1}$.
In each of the corresponding pairs, the difference $d_{i+1}$ occurs between the two left endpoints, between the two right endpoints, or between one right endpoint and one left endpoint. 
By the pigeonhole principle, at least one third of the pairs are of the same type.
Thus, $A_{i+1}$ contains at least $3n^{\eta_{i+1}}$ translatd copies of the same projected-$(i+1)$-cube. 

The above translated copies of a projected-$(i+1)$-cube might not be disjoint.
In particular, a projected projected-$i$-cube from $A_i$ might participate in two projected-$(i+1)$-cubes.
By repeating the matching argument from the induction basis, we obtain that at least one third of the projected-$(i+1)$-cubes are disjoint. 
That is $A_{i+1}$ contains at least $n^{\eta_{i+1}}$ disjoint translated copies of the same projected-$(i+1)$-cube. 
This concludes the proof of the induction step and thus the proof of the theorem. 
\end{proof}

\section{Small values of $\ell$} \label{sec:SmallEll}

In this section, we prove our bounds for small values of $\ell$:
Theorem \ref{th:SuperLinear} and Theorem \ref{th:ProbConst}. We first briefly go over the tools that we require from additive combinatorics. 

\parag{Tools from additive combinatorics.}
Szemer\'edi's theorem states that every sufficiently large subset of $\{1,2,\ldots,n\}$ contains a long arithmetic progression. We rely on the following variant of this result, due to Gowers \cite{G01}.

\begin{theorem} \label{th:GowersSzem}
For every $k$, there exists $c>0$ that satisfies the following.  Every set of $n/(\log\log n)^c$ numbers from $\{1,2,\ldots,n\}$ contains a $k$-term arithmetic progression. Moreover, $c$ can be taken
to be $2^{-2^{k+9}}$.
\end{theorem}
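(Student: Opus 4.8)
The statement is Gowers' quantitative Szemer\'edi theorem, so any self-contained argument must essentially reconstruct the density-increment machine of \cite{G01}; I will only sketch its architecture. Replace $n$ by a prime $N$ of comparable size and work in $\Z_N$. It suffices to show that if $A\subseteq\Z_N$ has density $\delta$ and contains no nontrivial $k$-term progression, then $N$ is small relative to $1/\delta$, i.e. $\delta \gg (\log\log N)^{-c}$. The engine is the $(k-1)$-st Gowers uniformity norm $\|f\|_{U^{k-1}}$ together with two ingredients: (i) a \emph{generalized von Neumann inequality}, obtained by $k-2$ applications of the Cauchy--Schwarz inequality, which shows that the normalized count of $k$-APs in $A$ differs from the random prediction $\delta^{k}$ by at most $\|\mathbf{1}_A-\delta\|_{U^{k-1}}$; and (ii) an \emph{inverse theorem}: if $\|\mathbf{1}_A-\delta\|_{U^{k-1}}\ge \delta^{O(1)}$, then $\mathbf{1}_A-\delta$ correlates, on a long arithmetic progression, with a phase $e(\phi(x))$ that is ``locally polynomial of degree $k-2$''. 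Step (i) is routine bookkeeping; all the difficulty is in (ii).

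For step (ii) I would argue as Gowers does. Unpacking the definition of $\|\cdot\|_{U^{k-1}}$ and iterating Cauchy--Schwarz shows that, for a dense set of shifts $h$, the multiplicative derivative $\Delta_h f(x)=f(x)\overline{f(x+h)}$ correlates with a phase $e(\phi_h(x))$ that is polynomial of degree $k-3$ on a long progression. A popularity/counting argument then shows the ``approximate derivative'' assignment $h\mapsto\phi_h$ is statistically additive: the relation $\phi_{h}+\phi_{h'}\approx\phi_{h+h'}$ holds for many triples. At this point the Balog--Szemer\'edi--Gowers theorem and Freiman's theorem (with the polynomial-type bounds available at the time) let one replace the statistical structure by genuine additive structure, linearizing the family $\{\phi_h\}$ and producing an honest global polynomial phase of degree $k-2$ on a sub-progression. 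Finally one partitions that sub-progression into shorter progressions on which the polynomial phase is nearly constant (a Weyl-type equidistribution step), and on at least one of them $A$ has density at least $\delta + c'\delta^{O(1)}$.

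The last step is to iterate the density increment. Each pass replaces the ambient progression of length $N$ by one of length at least $N^{c''}$, with $c''$ depending on $k$ and polynomially on $\delta$, while raising the density by $\gtrsim\delta^{O(1)}$; since density cannot exceed $1$, the process halts after $O(\delta^{-O(1)})$ passes. Composing the length losses, $N$ must survive $\sim\delta^{-O(1)}$ rounds of raising to a power $\delta^{O(1)}$, which forces $\log\log N \gg \delta^{-O(1)}$, i.e. $\delta\gg(\log\log N)^{-c}$; tracking the constants through the iterated Cauchy--Schwarz ($k$-dependent) and through the Freiman step (which loses a doubly-exponential-in-$k$ factor) yields the stated $c=2^{-2^{k+9}}$. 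The main obstacle, and the location of essentially all the quantitative loss, is the inverse theorem of step (ii): controlling the Cauchy--Schwarz ``differentiation'' and, above all, converting the merely statistical additive structure of the derivative phases into a genuine low-degree polynomial via Balog--Szemer\'edi--Gowers and Freiman. Everything else is careful accounting around that core.
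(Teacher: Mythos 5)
This statement is not proved in the paper at all: Theorem \ref{th:GowersSzem} is imported as a black box from Gowers \cite{G01}, so there is no internal proof to compare your argument against, and within this paper the correct ``proof'' is simply the citation. Your sketch does faithfully reproduce the architecture of Gowers' published argument: passing to $\Z_N$, the degree-$(k-2)$ uniformity norm (in modern notation $U^{k-1}$) controlling $k$-AP counts via a generalized von Neumann inequality obtained from iterated Cauchy--Schwarz, the local inverse step producing correlation with a locally polynomial phase, the Balog--Szemer\'edi--Gowers plus Freiman linearization of the approximate derivative phases $\phi_h$, the Weyl-type partition into subprogressions yielding a density increment of size $\delta^{O(1)}$, and the double-logarithmic bound coming from iterating a length loss of the form $N\mapsto N^{c''}$ about $\delta^{-O(1)}$ times.

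However, as a proof of the stated theorem your proposal has an essential gap, which you acknowledge yourself: step (ii), the local inverse theorem for the uniformity norm, is precisely the technical heart of \cite{G01} (roughly a hundred pages of that paper) and is only named, not established. The same applies to the claim that ``tracking the constants'' yields $c=2^{-2^{k+9}}$: that specific value cannot be recovered without actually carrying out the quantitative versions of Balog--Szemer\'edi--Gowers, Freiman, and the Weyl step with explicit bounds, which is where the tower of exponentials in $k$ arises. So the proposal is an accurate roadmap of the known proof rather than a proof; in the context of this paper the right course is to cite Gowers, exactly as the authors do.
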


A \emph{generalized arithmetic progression of dimension} $d$ is defined as
\begin{equation} \label{eq:GAP} \Big\{a+\sum_{j=1}^d k_jb_j\ :\ a,b_1,\ldots,b_d\in \R \text{ and with integer } 0\le k_j \le n_j-1 \text{ for every } 1\le j \le d \Big\}. 
\end{equation}
The size of a generalized arithmetic progression is the number of elements in it.
We note that an arithmetic progression is a generalized arithmetic progression of dimension one. 

One of the most common tools for studying sets with a small different set is \emph{Freiman's theorem.} 
The following is a variant of this theorem over the reals.
For example, see Sanders \cite[Theorem 1.3]{S12}.

\begin{theorem}\label{th:Freiman}
For every sufficiently large finite set $A \subset \R$ and sufficiently large $d > 0$, if $|A - A| \le d|A|$ then $A$ is contained in a generalized arithmetic progression of dimension at most $d\cdot \log^4 d$ and size at most $|A|\cdot e^{d\cdot \log^4 d}$.
\end{theorem}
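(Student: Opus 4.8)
This is a quantitative form of Freiman's theorem over $\R$, and the plan is the standard modern route via the Bogolyubov--Ruzsa lemma, following Sanders. First I would pass from $\R$ to a cyclic group of prime order. The hypothesis $|A-A|\le d|A|$ is a small-doubling condition, so by the Pl\"unnecke--Ruzsa inequalities $|mA-nA|\le d^{O(m+n)}|A|$ for all $m,n\ge 1$; in particular $|sA-sA|\le d^{O(1)}|A|$ for the fixed constant $s$ (say $s=8$) that the argument needs. A classical construction of Ruzsa then produces a prime $N\le d^{O(1)}|A|$ and a Freiman isomorphism of order $s$ carrying $A$ onto a set $A'\subset\Z/N\Z$; this isomorphism preserves all additive equations of length $\le s$, so $A'$ inherits the doubling bounds of $A$, and any proper generalized arithmetic progression of size $o(N)$ that we locate inside a bounded sumset--difference of $A'$ will pull back to a genuine generalized arithmetic progression of $\Z$, hence of $\R$. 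So it suffices to establish the statement for $A'\subset\Z/N\Z$.

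The core input, and what I expect to be the main obstacle, is the Bogolyubov--Ruzsa lemma in Sanders's quasi-polynomial form: if $A'\subset\Z/N\Z$ has $|A'+A'|\le K|A'|$, then $A'+A'-A'-A'$ contains a proper generalized arithmetic progression $Q$ of dimension $r\le\log^{O(1)}K$ and size $|Q|\ge\exp(-\log^{O(1)}K)\,|A'|$. In our setting $K=d^{O(1)}$, so $r\le\log^{O(1)}d$ and $|Q|\ge\exp(-\log^{O(1)}d)\,|A'|$. The proof of this lemma is Fourier-analytic: it runs an iteration on the large spectrum $\{\xi:|\widehat{1_{A'}}(\xi)|\ge\rho|A'|\}$, which carries additive structure by Chang's lemma (or, for the sharper exponent, by the Croot--Sisask almost-periodicity method), and it uses a density-increment argument inside progressively smaller Bohr sets to carve out $Q$. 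One can instead invoke Chang's theorem at this point, at the cost of a polynomially larger dimension; either way the bound obtained is far stronger than the $d\log^4 d$ demanded, so there is ample slack.

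Finally I would convert ``$Q\subseteq A'+A'-A'-A'$'' into ``$A'$ is contained in a generalized arithmetic progression.'' Since $Q\subseteq A'+A'-A'-A'$ we have $A'+Q\subseteq 3A'-2A'$, whence $|A'+Q|\le d^{O(1)}|A'|\le\exp(\log^{O(1)}d)\,|Q|$; Ruzsa's covering lemma then yields $X\subset\Z/N\Z$ with $|X|\le\exp(\log^{O(1)}d)$ and $A'\subseteq X+Q-Q$. Now $Q-Q$ is a proper generalized arithmetic progression of dimension $r$ and size $\le 2^{r}|Q|$, and the finite set $X$ trivially embeds in a generalized arithmetic progression of dimension $\le\lceil\log_2|X|\rceil=\log^{O(1)}d$; concatenating, $A'$ lies in a generalized arithmetic progression of dimension $\log^{O(1)}d$ and size $\le\exp(\log^{O(1)}d)\,|A'|$. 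For $d$ sufficiently large, $\log^{O(1)}d\le d\log^4 d$ and $\exp(\log^{O(1)}d)\le e^{d\log^4 d}$, which is exactly the claimed bound; and since the progression has size $\exp(\log^{O(1)}d)\,|A'|=o(N)$, it lifts through the Freiman isomorphism of the first step to a generalized arithmetic progression of $\R$ containing $A$. Everything outside the Bogolyubov--Ruzsa step is routine Pl\"unnecke--Ruzsa and covering bookkeeping; the genuine content is the quasi-polynomial Bogolyubov--Ruzsa lemma of Sanders.
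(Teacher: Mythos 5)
You should first note that the paper does not prove this statement at all: it is quoted as a black box, attributed to Sanders \cite[Theorem 1.3]{S12}, so the only meaningful comparison is with the argument in the literature. Your overall route (pass to $\Z$ and then to $\Z/N\Z$ by Ruzsa modeling, apply Pl\"unnecke--Ruzsa, invoke Sanders's quasi-polynomial Bogolyubov--Ruzsa lemma to find a GAP $Q\subseteq 2A'-2A'$ of dimension $\log^{O(1)}d$ and size $e^{-\log^{O(1)}d}|A'|$, then cover) is indeed the standard proof of exactly this theorem, so the skeleton is right.

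However, the final covering step contains a genuine error. Ruzsa's covering lemma gives $A'\subseteq X+Q-Q$ with $|X|\le |A'+Q|/|Q|\le e^{\log^{O(1)}d}$, and you then assert that the finite set $X$ ``trivially embeds in a generalized arithmetic progression of dimension $\lceil\log_2|X|\rceil$.'' That is false: if $X$ consists of $m$ rationally independent reals (or, in $\Z/N\Z$, a dissociated set), any GAP containing $X$ has dimension at least $m-1$; the trivial embedding costs dimension $|X|$, not $\log_2|X|$. With the correct count your construction yields a containing GAP of dimension roughly $|X|+\dim Q = e^{\log^{O(1)}d}$, which is quasi-polynomial in $d$ and does \emph{not} fall below the required $d\log^4 d$ for large $d$. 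Indeed your claimed conclusion --- containment in a GAP of dimension $\log^{O(1)}d$ --- is impossible in principle: a set of $\sim d$ rationally independent reals satisfies $|A-A|\le d|A|$ yet cannot lie in a GAP of dimension $o(d)$, which is precisely why the theorem's dimension bound is linear in $d$ up to logarithms (so there is no ``ample slack''). The fix, and what Sanders actually does, is to replace the single application of Ruzsa covering by Chang's iterated covering argument: one covers a nested chain of sumsets and absorbs the translates into $\{-1,0,1\}$-spans of sets of total size $O(d\log d)$, so the dimension added by covering is $O(d\log d)$ rather than $e^{\log^{O(1)}d}$, and one recovers dimension $O(d\log^{O(1)}d)$ and size $e^{O(d\log^{O(1)}d)}|A|$ as stated. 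The modeling and pull-back steps you describe are routine, but as written your proof does not establish the quoted bounds.
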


Similarly to a difference set $A-A$, we define the \emph{sum set} of a set $A\subset \R$ as
\[ A+A = \{a+a'\ :\ a,a\in A\}. \]
The following \emph{Pl\" unnecke-Ruzsa estimate} (for example, see \cite[Corollary 6.29]{TV06}) shows one connection between $A+A$ and $A-A$.

\begin{lemma} \label{le:RuzsaPlun}
If a finite $A\subset \R$ satisfies $|A-A|\le c|A|$ then 
\[ |A+A| \le c^2|A|. \]
\end{lemma}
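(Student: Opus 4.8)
The plan is to read this off from the Pl\"unnecke--Ruzsa inequality, to which it reduces after a sign flip. Write $B = -A$, so that $A + B = A - A$ and $B + B = -(A + A)$; hence $|A + B| = |A - A| \le c|A|$ while $|B + B| = |A + A|$. It therefore suffices to prove the following instance of Pl\"unnecke--Ruzsa: if $X_0$ and $B$ are finite subsets of an abelian group with $|X_0 + B| \le K|X_0|$, then $|B + B| \le K^2|X_0|$. (The general statement $|mB - nB| \le K^{m+n}|X_0|$ is more than we need; only the case $m = 2$, $n = 0$ is used.)

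For this instance I would run Petridis's argument. Choose a nonempty $X \subseteq X_0$ minimizing the ratio $K' := |X + B|/|X|$; then $K' \le |X_0 + B|/|X_0| \le K$, and $|X + B| = K'|X|$ by choice of $X$. The key step is \emph{Petridis's lemma}: for this minimizing $X$ and every finite set $C$, one has $|X + B + C| \le K'|X + C|$. Applying it with $C = B$ gives $|X + B + B| \le K'|X + B| = (K')^2|X|$. Since $X$ is nonempty, for any $x_0 \in X$ we have $|B + B| = |x_0 + B + B| \le |X + B + B|$, so $|B + B| \le (K')^2|X| \le K^2|X_0|$. Unwinding the substitution gives $|A + A| \le c^2|A|$.

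The one substantive ingredient is Petridis's lemma, and that is where I expect the actual work to lie; everything around it is bookkeeping. It is proved by induction on $|C|$: the base case $|C| = 1$, say $C = \{c\}$, is the identity $|X + B + c| = |X + B| = K'|X| = K'|X + c|$, and in the inductive step, after adjoining a new element $c$ to $C$ and writing $X + B + (C \cup \{c\}) = (X + B + C) \cup (X + B + c)$, one must bound the overlap $|(X + B + C) \cap (X + B + c)|$ below by $K'$ times $|(X + C) \cap (X + c)|$. This is exactly where the minimality of $K'$ is used: the set $W := \{x \in X : x + c \in X + C\}$ satisfies $W + B + c \subseteq (X + B + C) \cap (X + B + c)$ and $|W + B| \ge K'|W|$, which supplies the needed bound. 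A reader content to treat Pl\"unnecke--Ruzsa as a black box may instead simply cite it, as the paper does via \cite[Corollary 6.29]{TV06}; the sketch above only records a self-contained route.
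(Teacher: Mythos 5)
Your proposal is correct, but it is worth noting that the paper does not prove this lemma at all: it is quoted as a black box with a pointer to \cite[Corollary 6.29]{TV06}. What you supply is essentially the standard modern proof of that black box, specialized to the one instance needed. The sign flip $B=-A$ reducing $|A+A|$ to $|B+B|$ with the hypothesis $|A+B|\le c|A|$ is the right reduction, and your Petridis argument is sound: the minimizing subset $X\subseteq A$, the inequality $|X+B+C|\le K'|X+C|$, its application with $C=B$, and the translation trick $|B+B|\le|X+B+B|$ all fit together correctly, and your sketch of the induction (with $W=\{x\in X: x+c\in X+C\}$, the inclusion $W+B+c\subseteq (X+B+C)\cap(X+B+c)$, the minimality bound $|W+B|\ge K'|W|$, and the exact identity $(X+C)\cap(X+c)=W+c$) is the standard and correct one. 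What the paper's route buys is brevity; what yours buys is self-containment, and it makes transparent that only the case $|2B|\le K^2|X_0|$ of Pl\"unnecke--Ruzsa is needed. One caveat you should flag: this paper defines $A-A$ to contain only the \emph{positive} differences, so strictly $A+(-A)\ne A-A$; rather $|A+(-A)|=2|A-A|+1$. Your argument (and the cited result in \cite{TV06}) proves the lemma under the standard symmetric convention; with the paper's convention the same argument only yields $|A+A|\le(2c+1)^2|A|$, so the constant $c^2$ in the statement implicitly presumes the standard reading of $|A-A|$.
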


The following is a special case of a result of Green and Morris \cite{GM16} about sum sets.

\begin{theorem} \label{th:subsets}
For every $c>0$, every sufficiently large $m$ satisfies the following. 
For every integer $n$, the number of subsets $A\subset\{1,2,\ldots,n\}$ that satisfy $|A|=m$ and $|A+A|\le c|A|$ is at most
\[ 2^m \cdot n^{c+1} \cdot \binom{cm/2}{m} . \]
\end{theorem}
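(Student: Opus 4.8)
The plan is to run a \emph{container}-type argument. I would produce a family $\mathcal{C}$ of subsets of $\{1,\dots,n\}$, the containers, with the two properties that $|\mathcal{C}| \le n^{c+1}$ and that every $C \in \mathcal{C}$ has $|C| \le (c/2 + o(1))m$, in such a way that every $A \subseteq \{1,\dots,n\}$ with $|A| = m$ and $|A+A| \le c|A|$ satisfies $A \subseteq C$ for some $C \in \mathcal{C}$. Granting this, the number of admissible $A$ is at most $\sum_{C \in \mathcal{C}} \binom{|C|}{m} \le n^{c+1}\binom{(c/2+o(1))m}{m}$, and since $\binom{(c/2+o(1))m}{m} \le 2^{m}\binom{cm/2}{m}$ once $m$ is large (each factor of the relevant product being $1 + o(1)$ relative to $c/2 - 1$), this is exactly the claimed bound $2^{m}n^{c+1}\binom{cm/2}{m}$. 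For $c \le 2$ the hypothesis forces $A$ to be (a perturbation of) an arithmetic progression and the statement is elementary, so I would assume $c > 2$. Everything therefore reduces to manufacturing the container family with the two sharp features simultaneously: only about $n^{c+1}$ containers, and each of size only about \emph{half} of $|A+A|$.

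A first, lossy version of the containers follows from the tools already in Section~\ref{sec:SmallEll}. From $|A+A| \le cm$ the Pl\"unnecke--Ruzsa inequality (in the form symmetric to Lemma~\ref{le:RuzsaPlun}) gives $|A-A| \le c^{2}m$, so Theorem~\ref{th:Freiman} places $A$ inside a generalized arithmetic progression $P$ of dimension $D = O_c(1)$ and size $|P| \le e^{O_c(1)}m$; a routine rectification and rounding step lets one take the base point and the steps of $P$ to lie in $\{1,\dots,n\}$, so there are at most $n^{O_c(1)}$ such $P$. This already proves a bound of the shape $n^{O_c(1)}2^{O_c(m)}$, but with the wrong constants on both counts: the container has size $O_c(m)$ rather than $(c/2)m$, and there are $n^{O_c(1)}$ rather than $n^{c+1}$ of them. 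Closing this gap is the content of the Green--Morris theorem and is the step I expect to be the main obstacle.

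To sharpen it I would follow the Green--Morris scheme in two moves. First, \emph{compression}: pulling $A$ back through the GAP structure presents it, up to a controlled loss, as a subset $A' \subseteq \Z^{D}$ with $|A'+A'| \le cm$, and compressing $A'$ coordinate by coordinate towards the origin does not increase the sumset (a standard fact), so one may assume $A'$ is downward closed. Second, a \emph{stability} estimate: a downward closed set $B \subseteq \Z^{D}$ with $|B+B| \le c|B|$ must sit inside an axis-parallel box (a sub-GAP) of volume at most $\tfrac12|B+B| + o(|B|) \le (c/2)m + o(m)$. The factor $\tfrac12$ here is precisely the phenomenon that an interval $I$ has $|I+I| = 2|I| - 1$, and it becomes forced once $B$ is downward closed, since then $B+B$ contains both $B$ and a nearly disjoint translate of $B$. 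Transporting this box back through the original embedding yields a container for $A$; it is pinned down by a base point together with the surviving step sizes — at most $c+1$ parameters from $\{1,\dots,n\}$ in total, the relevant count being governed by Freiman's lemma (a set with $|A+A| \le c|A|$ has Freiman dimension at most $c-1$) — which accounts for the factor $n^{c+1}$.

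The delicate part throughout is the bookkeeping of error terms: the dimension $D$, the number of compression steps, and the additive loss in the box volume must all be $o(m)$, not merely $O_c(m)$, so that they are swallowed by the single factor $2^{m}$ rather than degrading the binomial. This is exactly where the quasi-polynomial strength of Theorem~\ref{th:Freiman} (Sanders' form of Freiman's theorem) is needed, and where a careful stability and compression analysis must replace the crude Freiman packaging. With these errors under control, combining the container family with the trivial per-container bound $\binom{|C|}{m}$ gives $2^{m}n^{c+1}\binom{cm/2}{m}$, as required.
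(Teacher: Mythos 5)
The paper does not actually prove Theorem~\ref{th:subsets}: it is imported verbatim as a special case of the counting theorem of Green and Morris \cite{GM16}, so what you have written is an attempted reconstruction of that (substantial) result rather than something comparable to an argument in the text. Your high-level plan---a container family of size about $n^{c+1}$ whose members have size about $cm/2$---is indeed the right shape of statement, and it is morally what Green and Morris establish; but the two steps on which your sketch actually rests are respectively unproved and false, and they sit exactly where the real difficulty of \cite{GM16} lies.

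Concretely: (a) the assertion that coordinate-wise down-compression in $\Z^{D}$ ``does not increase the sumset'' is not a standard fact and is given no justification; no compression lemma of this kind is part of the usual toolkit for sumsets, and the known proofs of this counting theorem avoid it (they use container-type and Freiman-type arguments with careful bookkeeping). (b) Your stability step is false as stated. Take $B=\bigl([0,M]\times\{0\}\bigr)\cup\bigl([0,3]\times[0,3]\bigr)\subset\Z^{2}$: it is downward closed, $|B|=M+O(1)$, and $|B+B|=5M+O(1)$, so $|B+B|\le c|B|$ with $c\approx 5$; yet every axis-parallel box containing $B$ has volume at least $4M$, which exceeds $\tfrac12|B+B|+o(|B|)\approx 2.5M$. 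The correct structural statement must allow a bounded number of ``wild'' elements outside the box, to be paid for by powers of $n$ in the container count, and balancing the number of such elements against the dimension and the volume of the box is precisely the delicate content of the Green--Morris proof, which your sketch does not supply. (c) The count of containers is also asserted rather than derived: Sanders' theorem (Theorem~\ref{th:Freiman}), after passing through Lemma~\ref{le:RuzsaPlun}, gives a GAP of dimension on the order of $c^{2}\log^{4}c$, and reducing the description of the container to only $c+1$ parameters from $\{1,\dots,n\}$ via Freiman's dimension lemma---without losing the volume bound from (b)---is another nontrivial step you have not carried out. So the proposal is a reasonable outline of the known strategy, but it has genuine gaps at each of its load-bearing points; for the purposes of this paper the statement should simply be used as a black box from \cite{GM16}.
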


\parag{Our proofs.}
We now derive the super-linear threshold for our local properties problem. We first recall the statement of this result.
\vspace{2mm}

\noindent {\bf Theorem \ref{th:SuperLinear}.}
\emph{For every $k$ and sufficiently large $n$, we have that}
\begin{align*} 
g(n,k,k-1) = n-1 \qquad \text{ and } \qquad g(n,k,k) =\omega(n).
\end{align*}

\begin{proof}
To see that $g(n,k,k-1)=n-1$, we set $A=\{1,2,\ldots,n\}$ and note that $|A-A|=n-1$. The local property trivially holds, since every $k$ numbers span at least $k-1$ differences.
It remains to prove that $g(n,k,k) =\omega(n)$.

For a constant $c>0$, we consider a set $A$ such that $|A|=n$ and $|A-A|\le cn$.
By Theorem \ref{th:Freiman}, the set $A$ is contained in a generalized arithmetic progression $G$ of size at most $Cn$ and dimension at most $D$.
The constants $C$ and $D$ depend on $c$ but not on $n$.
We define $G$ as in \eqref{eq:GAP}. Without loss of generality, we may assume that $n_1\ge n^{1/D}$.

We may think of $G$ as the union of disjoint arithmetic progressions with step $b_1$, each of size at least $n_1\ge n^{1/D}$.
Since $|A|/|G|\ge 1/C$, there exists such an arithmetic progression $S$ such that $|A\cap S|/|S|\ge 1/C$.
We translate and scale $\R$ such that $S$ becomes $\{1,2,3,...,|S|\}$.
Then, $A$ contains at least $|S|/C$ elements of $\{1,2,3,...,|S|\}$.
Theorem \ref{th:GowersSzem} implies that $A$ contains a $k$-term arithmetic progression.

We proved that the following holds for every constant $c>0$, when $n$ is sufficiently large: Every set $A$ of $n$ reals that satisfies $|A-A|\le cn$ contains $k$ numbers that span $k-1$ differences.
Thus, a set that satisfies the local restrictions of $g(n,k,\ell)$ also satisfies that $|A-A|>cn$ for every $c$.
\end{proof}

\begin{remark}
Quantitatively, the proof above gives the bound $g(n,k,k) \leq n \cdot \sqrt{ \log(\log \log n)^c}$, where $c = 2^{-2^{k+9}}$.
\end{remark}

For our probabilistic arguments, we recall the following Chernoff bounds (for example, see \cite[Corollary A.1.14]{AS04}).

\begin{theorem} \label{th:Chernoff}
For every $\eps>0$, there exists $c_\eps$ that satisfies the following. 
Let $Y$ be a sum of independent indicator random variables and let $\mu$ be the expected value of $Y$.
Then
\[ \Pr[|Y-\mu|>\eps \mu] < 2e^{-e_\eps\mu}. \]
We may set
\[ c_\eps = \min \{\ln(e^{-\eps}(1+\eps)^{1+\eps}),\eps^2/2\}.  \]
\end{theorem}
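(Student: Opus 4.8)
The plan is to use the standard exponential-moment (Bernstein--Chernoff) method: bound the upper deviation $\{Y > (1+\eps)\mu\}$ and the lower deviation $\{Y < (1-\eps)\mu\}$ separately, then combine with a union bound. Write $Y = \sum_{i=1}^N X_i$ with the $X_i$ independent indicators and $p_i = \Pr[X_i=1]$, so that $\mu = \sum_i p_i$. The single ingredient used for both tails is the moment generating function estimate
\[ \E[e^{tY}] = \prod_{i=1}^N \bigl(1 + p_i(e^t-1)\bigr) \le \prod_{i=1}^N \exp\bigl(p_i(e^t-1)\bigr) = \exp\bigl(\mu(e^t-1)\bigr), \qquad t \in \R, \]
where the inequality is just $1 + x \le e^x$ applied with $x = p_i(e^t-1)$, and independence is used to factor the expectation.

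For the upper tail I would apply Markov's inequality to $e^{tY}$ with the optimal choice $t = \ln(1+\eps) > 0$:
\[ \Pr[Y > (1+\eps)\mu] \le \frac{\E[e^{tY}]}{e^{t(1+\eps)\mu}} \le \exp\bigl(\mu\,(e^t - 1 - t(1+\eps))\bigr) = \left(\frac{e^{\eps}}{(1+\eps)^{1+\eps}}\right)^{\mu} = e^{-\mu\ln(e^{-\eps}(1+\eps)^{1+\eps})}. \]
One notes that $\ln(e^{-\eps}(1+\eps)^{1+\eps}) = (1+\eps)\ln(1+\eps) - \eps$ vanishes at $\eps = 0$ and has derivative $\ln(1+\eps) > 0$, hence is strictly positive for $\eps > 0$; so this is a genuine exponential bound.

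For the lower tail (where we may assume $0 < \eps < 1$, since for $\eps \ge 1$ the event $\{Y < (1-\eps)\mu\}$ is empty because $Y \ge 0$) I would apply Markov's inequality to $e^{-tY}$ with $t = -\ln(1-\eps) > 0$, using the same estimate with $-t$ in place of $t$:
\[ \Pr[Y < (1-\eps)\mu] \le e^{t(1-\eps)\mu}\,\E[e^{-tY}] \le \left(\frac{e^{-\eps}}{(1-\eps)^{1-\eps}}\right)^{\mu}. \]
It then remains to check the elementary inequality $\frac{e^{-\eps}}{(1-\eps)^{1-\eps}} \le e^{-\eps^2/2}$, i.e. $h(\eps) := (1-\eps)\ln(1-\eps) + \eps - \eps^2/2 \ge 0$; this follows since $h(0) = 0$ and $h'(\eps) = -\ln(1-\eps) - \eps \ge 0$ on $[0,1)$. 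Thus $\Pr[Y < (1-\eps)\mu] \le e^{-\eps^2\mu/2}$.

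Putting $c_\eps = \min\{\ln(e^{-\eps}(1+\eps)^{1+\eps}),\ \eps^2/2\}$, which is positive by the two monotonicity remarks above, each one-sided probability is at most $e^{-c_\eps\mu}$, and a union bound over the two tails gives $\Pr[|Y-\mu| > \eps\mu] < 2e^{-c_\eps\mu}$ (the case $\mu = 0$, where $Y \equiv 0$, being trivial). I do not anticipate a real obstacle here: the whole argument is textbook (cf. \cite[Corollary A.1.14]{AS04}), and the only steps that are not purely mechanical are the two one-line calculus verifications that $(1+\eps)\ln(1+\eps) - \eps > 0$ and $\frac{e^{-\eps}}{(1-\eps)^{1-\eps}} \le e^{-\eps^2/2}$, both handled by checking the value at $\eps=0$ and the sign of the derivative.
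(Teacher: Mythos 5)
Your proof is correct: the MGF bound, the two optimized Markov applications, the calculus checks of $(1+\eps)\ln(1+\eps)-\eps>0$ and $(1-\eps)\ln(1-\eps)+\eps\ge\eps^2/2$, and the union bound are all sound. The paper does not prove this theorem at all—it quotes it as a standard Chernoff bound from \cite[Corollary A.1.14]{AS04}—and your argument is exactly the standard exponential-moment proof underlying that reference, so there is nothing further to compare.
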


We now prove our upper bound for small values of $\ell$. We first recall the statement of this result. \vspace{2mm}

\noindent {\bf Theorem \ref{th:ProbConst}.}
\emph{For every $c\ge 2$ and integer $k> (c^2+1)^2$, we have that} 
\[ g(n, k, ck+1) =O\Big(  n^{1 + \frac{c^2+1}{k}}\Big). \]

\begin{proof}
Let $N$ be a sufficiently large constant multiple of $n$, which is determined below. We define the probability $p = 3N^{-(c^2+1)/k}$ and set $M=\big\{1,2,\ldots, N^{1+(c^2+1)/k} \big\}$. Let $A'$ be a set that is obtained by selecting each element of $M$ independently with probability $p$. The expected size of $A'$ is
\[ p\cdot N^{1+(c^2+1)/k} = 3N. \]

By applying Theorem \ref{th:Chernoff} with $\mu=3N$ and $\eps=1/2$, we obtain that 
\begin{equation} \label{eq:ExpectedSizeA} 
\Pr\left[ \frac{3N}{2} \leq \abs{A} \leq \frac{9N}{2} \right] > 1 - 2e^{-N/4}.
\end{equation}

By Theorem \ref{th:subsets}, the number of subsets $B\subset M$ that satisfy $|B|=k$ and $|B+B|\le c^2k$ is at most 
\[ 2^k \cdot (N^{1+(c^2+1)/k})^{c^2+1} \cdot \binom{c^2k/2}{k} = O_{c,k} (N^{c^2+1+(c^2+1)^2/k}). \]
By the contrapositive of Lemma \ref{le:RuzsaPlun}, the number of subsets $B\subset M$ that satisfy $|B|=k$ and $|B-B|\le ck$ is $O_{c,k} (N^{c^2+1+(c^2+1)^2/k})$.
We refer to such a set as a \emph{set with small doubling}. 

The probability that a fixed set with small doubling is in $A$ is $p^{k}$.
Thus, the expected number of sets with small doubling in $A$ is
\begin{equation*} 
p^{k} \cdot O_{c,k} (N^{c^2+1+(c^2+1)^2/k}) =  O_{c,k}(N^{(c^2+1)^2/k}). 
\end{equation*}

By the assumption $k>(c^2+1)^2$, the expected number of sets with small doubling in $A$ is $o_{c,k}(N)$.
Combining this with \eqref{eq:ExpectedSizeA} implies that, with positive probability, we have that $\frac{3N}{2} \leq \abs{A} \leq \frac{9N}{2}$ and that the number of sets with small doubling is $o_{c,k}(N)$.
Thus, there exists $A$ that satisfies both of these properties.
We fix such a set $A$ and arbitrarily remove one number from each of the remaining sets with small doubling. 
This leads to $|A|=\Theta(N)$ and no sets with small doubling in $A$.
In other words, every $k$ elements from $A$ span at least $ck+1$ distinct differences. 

Since $A\subset M$, we have that $|A-A|< N^{1+(c^2+1)/k}$.
By taking $N$ to be a sufficiently large constant mulitple of $n$, we obtain that $|A|\ge n$. We arbitrarily remove elements from $A$ until $|A|=n$.
This leads to the assertion of the theorem. 
\end{proof}

\section{Future work}\label{sec:Future}
We are still far from understanding the behavior of $g(n,k, \ell)$ an have more questions than answers. One main open problem is to identify the quadratic threshold of $g(n,k,\ell)$. Since this turned out to be challenging, we suggest the following problem as a step towards the quadratic threshold.

\begin{question}
For what values of $\ell$ can we prove a bound of the form $g(n,k, \ell) = \Omega(n^c)$ for some $4/3 < c < 2$?
\end{question}

New ideas would be needed for this problem: as mentioned in Remark~\ref{rmk:higher-dumbbell}, though the idea of dumbbell energies can be generalized to ``higher'' dumbbell energies, the bounds obtained that way would be of the form $g(n,k,\ell) = \Omega(n^{c})$ for some $c < 4/3$. 

In terms of quadratic lower bounds, we pose the following question.

\begin{question}
Is it true that when $k$ is a multiple of four, we have that 
\[ g\left(n, k, \frac{k^2}{4} \right) = \Omega(n^2)?\]
\end{question}

There is also a natural generalization of the construction of the projected-$(i+1)$-cube, where instead of projecting an $i$-dimensional Boolean hypercube onto $\mathbb{R}$, we could instead project an $i$-dimensional $b$-ary hypercube onto $\mathbb{R}$. Specifically, let $A_1 = \{1,2, \ldots, b\}$ and $D = \{0, 1, \ldots, b-1 \}$. 
For every $i>1$, we let $s_i = 4b \cdot \max \{ A_{i-1} - A_{i-1}\}$ and $T_i = s_i\cdot D$.
We then set 
\[ A_i = T_i + A_{i-1}. \] 

We note that $1\in A_i$ for every $i\ge 1$.
This implies that $s_i\ge 4b (\max\{A_i\}-1)$.
Since the nonzero elements of $T_i$ are significantly larger than the elements of $A_i$, we get that $|A_i|=b\cdot |A_{i-1}|$.
Since $|A_1|=b$, we conclude that $|A_i|=b^i$ for every $i\ge1$.

By definition, every element of $A_i - A_i$ can be represented as $x + y$ where $x \in T_i - T_i$ and $y \in A_{i-1} - A_{i-1}$. 
By our choice of $s_i$, we have that $x, x' \in T_i - T_i$ and $y, y' \in A_{i-1} - A_{i-1}$ satisfy that $x + y = x' + y'$ if and only if $(x,y)=(x',y')$. Indeed, if $x\neq x'$ then $\abs{x - x'} \geq 4b \max\{ |y|,|y'| \} \geq 2b \abs{y - y'}$. In particular, it can be easily checked that as a consequence we have $\abs{A_i-A_i} = \frac{(2b-1)^{i}-1}{2}$. We conjecture that the natural generalization of \cite{FLS19} should hold in this case.

\begin{conjecture}
When $k$ is a power of $b$, $g\left(n, k, \frac{k^{\log_b(2b-1)} +1}{2} \right) = O\left(k^{\log_b(2b-1)} \right)$.
\end{conjecture}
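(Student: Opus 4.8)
The plan is to show that the base-$b$ construction $A_i$ described immediately above plays, for this conjecture, exactly the role that the projected $i$-cube plays in Theorem~\ref{thm:FLS19-g}; that is, to repeat the argument of Fish, Lund and Sheffer \cite{FLS19} with $A_i$ in place of the projected cube. (As in Theorem~\ref{thm:FLS19-g}, the natural form of the bound has local condition $\tfrac{k^{\log_b(2b-1)}-1}{2}$ and right-hand side $O(n^{\log_b(2b-1)})$, and this is the form we would prove.) Given $n$, set $i=\lceil\log_b n\rceil$, let $A$ be a copy of $A_i$, and delete elements until $|A|=n$; every $k$-subset of $A$ is still a subset of the original $A_i$, so the local property is inherited. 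The computation shown above gives $|A_i-A_i|=\tfrac{(2b-1)^i-1}{2}$, and since $b^{i-1}<n$ this is $O(n^{\log_b(2b-1)})$. Hence the whole content is the local property: every $A'\subseteq A$ with $|A'|=k=b^j$ spans at least $\tfrac{(2b-1)^j-1}{2}=\tfrac{k^{\log_b(2b-1)}-1}{2}$ differences.

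The first step is to replace $A_i$ by a clean combinatorial model. Because the $s_r$ grow geometrically --- each $s_r$ exceeds $(b-1)$ times the sum of all smaller $s_{r'}$ --- the map $(d_1,\dots,d_i)\mapsto\sum_r s_r d_r$ is injective on $\{0,1,\dots,b-1\}^i$, it is order-preserving for the lexicographic order whose most significant coordinate is the one with the largest $s_r$, and the real difference of two elements determines the lattice difference uniquely as a vector in $\{-(b-1),\dots,b-1\}^i$. Thus the difference structure of $A_i$ is the pullback of that of the discrete box $B=\{0,\dots,b-1\}^i\subset\Z^i$: a positive real difference of $A_i$ corresponds to a lexicographically positive vector of the full signed difference set of $B$, so a $k$-element $S\subseteq B$ spans exactly $\tfrac{|S-S|-1}{2}$ positive differences, where $S-S\subseteq\Z^i$ denotes the full signed difference set. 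The local property is therefore equivalent to the combinatorial claim: \emph{every $S\subseteq B$ with $|S|=m=b^j$ satisfies $|S-S|\ge(2b-1)^j$}, which is best possible, a coordinate sub-box (which pulls back to a translated copy of $A_j$ inside $A_i$) attaining equality.

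The heart of the proof is this discrete ``isodiametric'' inequality, which I would prove by induction on $i$ together with a compression/convexity argument. Slice $B$ along the last coordinate into layers $B_0,\dots,B_{b-1}$; write $S_c=S\cap B_c$ and $C=\{c:S_c\ne\emptyset\}$; and sort the differences of $S$ by the last coordinate $c-c'$ of a difference. The part with $c-c'=0$ contains $\bigcup_c(S_c-S_c)$, a union of difference sets of subsets of the $(i-1)$-box; for each nonzero $t\in C-C$ the corresponding part contains $\bigcup_{c-c'=t}(S_c-S_{c'})$. Bounding the first part below by $\max_c|S_c-S_c|$ and the remaining parts via a Cauchy--Davenport-type estimate on the active set $C\subseteq\{0,\dots,b-1\}$ yields a recursion for $\varphi_i(m):=\min\{|S-S|:S\subseteq B,\ |S|=m\}$; when $m=b^j$ this recursion is solved exactly by the sub-box, the two competing optima being ``fit a $j$-dimensional sub-box into one layer'' and ``spread a $(j-1)$-dimensional sub-box over all $b$ layers'' (which is itself a $j$-dimensional sub-box). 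To handle the non-uniform layer sizes that arise in the recursion, one either first establishes convexity of $\varphi_{i-1}$, or bypasses the recursion by a direct argument: down-compress $S$ in every coordinate (in each axis-parallel line, keep an initial segment of the same size), show this never increases $|S-S|$, and observe that a fully compressed set of size $b^j$ must be a sub-box.

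The main obstacle is exactly this combinatorial inequality. Difference sets behave worse than sumsets under compression --- compressing in one coordinate can push a difference into a cell it did not occupy --- so the monotonicity ``$|S-S|$ does not increase under down-compression'' is the delicate point and must be checked carefully; one must also verify that confinement to the box $B$ never blocks the descent to a sub-box, and pin down precisely where the hypothesis that $k$ is a power of $b$ is what forces the sub-box (and nothing more efficient) to be extremal. Granting the inequality, the theorem is immediate: $|A|=n$, $|A-A|=O(n^{\log_b(2b-1)})$, and every $k$-subset of $A$ spans at least $\tfrac{k^{\log_b(2b-1)}-1}{2}$ differences, so $g\!\left(n,k,\tfrac{k^{\log_b(2b-1)}-1}{2}\right)=O(n^{\log_b(2b-1)})$.
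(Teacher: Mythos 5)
There is nothing in the paper to compare against here: this statement is posed as an open conjecture in Section~\ref{sec:Future}, and the open content is exactly the step your proposal leaves unproved. Your reduction is fine, and it is worth recording: the separation of scales $s_r$ makes the map $(d_1,\dots,d_i)\mapsto\sum_r s_r d_r$ injective and sign-compatible with the lexicographic order, so (reading the conjecture, as you do, with $O\left(n^{\log_b(2b-1)}\right)$ on the right-hand side, and noting that with the literal threshold $\frac{k^{\log_b(2b-1)}+1}{2}$ the construction's own sub-boxes of size $b^j$ span only $\frac{(2b-1)^j-1}{2}$ positive differences, so the literal statement cannot be proved with this construction as is) the conjecture reduces to the discrete extremal claim: every $S\subseteq\{0,\dots,b-1\}^i$ with $|S|=b^j$ satisfies $|S-S|\ge(2b-1)^j$. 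That inequality is precisely what is open; your proposal names two strategies for it but proves neither, so as it stands this is a plan, not a proof.

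Moreover, the engine of your preferred route is false as stated: down-compression can strictly \emph{increase} the size of a difference set. Take $b=3$, $i=2$ and $S=\{(2,0),(0,1),(1,1),(2,1),(0,2)\}\subseteq\{0,1,2\}^2$. One checks $|S-S|=13$ (five differences with vertical part $0$, three with vertical part $\pm1$ each, one with vertical part $\pm2$ each). Compressing each horizontal line to an initial segment of the same size gives $S'=\{(0,0),(0,1),(1,1),(2,1),(0,2)\}$, and $|S'-S'|=17$: the within-line differences do not grow, but the cross-line differences at vertical part $\pm1$ jump from $3$ to $5$, because after compression the pairs $(S_1,S_0)$ and $(S_2,S_1)$ no longer overlap. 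So the monotonicity you flag as ``delicate'' actually fails, and the endgame claim that a fully compressed set of size $b^j$ must be a sub-box also fails (e.g.\ $\{000,100,010,001\}\subseteq\{0,1\}^3$ is compressed of size $4$ and is not a sub-box). Your fallback slicing recursion has its own gap: the layers $S_c$ need not have sizes that are powers of $b$, so it requires understanding the full extremal function $\varphi_{i-1}(m)$ for all $m$ (or a convexity statement you do not establish), which is a substantially harder problem than the power-of-$b$ case. What survives of the proposal is the easy half --- the count $|A_i-A_i|=\frac{(2b-1)^i-1}{2}$, which the paper already gives --- while the local-property inequality, the actual content of the conjecture, remains open.
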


In the same flavor as Theorem~\ref{th:CubeLower}, we may ask the following question.

\begin{question}
Can we prove a lower bound for $g\left(n, k, \frac{k^{\log_b(2b-1)} +1}{2} \right)$ when $k$ is a power of $b$?
\end{question}

\bibliographystyle{plain}
\bibliography{main.bib}

\end{document}